\begin{document}
\begin{frontmatter}
  \title{An Ultrametric for Cartesian Differential Categories for\\[1ex] Taylor Series Convergence\thanksref{ALL}} 						
 \thanks[ALL]{Thank you to the organizers of QUALOG2023 for inviting me to give a talk on this research, and to the audience of QUALOG2023 for listening to the talk and their questions. The author is financially supported by an ARC DECRA (DE230100303) and an AFOSR Research Grant (FA9550-24-1-0008). This material is based upon work supported by the Air Force Office of Scientific Research under award number FA955024-1-0008.}   
  \author{Jean-Simon Pacaud Lemay\thanksref{a}\thanksref{myemail}}	
   \address[a]{Macquarie Univeristy\\				
   Sydney, NSW, Australia}  							
   \thanks[myemail]{Email: \href{mailto:js.lemay@mq.edu.au}{\texttt{\normalshape
      js.lemay@mq.edu.au}} }
\begin{abstract} 
Cartesian differential categories provide a categorical framework for multivariable differential calculus and also the categorical semantics of the differential $\lambda$-calculus. Taylor series expansion is an important concept for both differential calculus and the differential $\lambda$-calculus. In differential calculus, a function is equal to its Taylor series if its sequence of Taylor polynomials converges to the function in the analytic sense. On the other hand, for the differential $\lambda$-calculus, one works in a setting with an appropriate notion of algebraic infinite sums to formalize Taylor series expansion. In this paper, we provide a formal theory of Taylor series in an arbitrary Cartesian differential category without the need for converging limits or infinite sums. We begin by developing the notion of Taylor polynomials of maps in a Cartesian differential category and then show how comparing Taylor polynomials of maps induces an ultrapseudometric on the homsets. We say that a Cartesian differential category is Taylor if maps are entirely determined by their Taylor polynomials. The main results of this paper are that in a Taylor Cartesian differential category, the induced ultrapseudometrics are ultrametrics and that for every map $f$, its Taylor series converges to $f$ with respect to this ultrametric. This framework recaptures both Taylor series expansion in differential calculus via analytic methods and in categorical models of the differential $\lambda$-calculus (or Differential Linear Logic) via infinite sums.  
\end{abstract}
\begin{keyword}
Cartesian Differential Categories; Taylor Series; Taylor Polynomials 
\end{keyword}
\end{frontmatter}
\section{Introduction}\label{intro}

Cartesian differential categories \cite{blute2009cartesian} provide the categorical foundations of multivariable differential calculus. The key feature of a Cartesian differential category is that it comes equipped with a \emph{differential combinator} $\mathsf{D}$, which for every map $f: A \to B$ produces its derivative $\mathsf{D}[f]: A \times A \to B$, generalizing the concept of the total derivative from differential calculus. Cartesian \emph{closed} differential categories provide the categorical semantics of the differential $\lambda$-calculus and the resource calculus \cite{bucciarelli2010categorical,Cockett-2019,ehrhard2003differential,manzonetto2012categorical,cockett2016categorical}. Cartesian differential categories are also closely related to differential categories \cite{blute2006differential,Blute2019}, which provide the categorical semantics of Differential Linear Logic \cite{ehrhard2006differential,ehrhard2017introduction}. Explicitly, the coKleisli category of a differential category is a Cartesian differential category \cite{blute2009cartesian,garner2020cartesian}. Cartesian differential categories have also been successful in formalizing various important concepts in differential calculus, such as solving differential equations and exponential functions \cite{lemay2020exponential}, Jacobians and gradients \cite{EPTCS372.3}, de Rham cohomology \cite{cruttwell2013forms}, linearization \cite{cockett2020linearizing}, etc. Cartesian (closed) differential categories have also been quite popular in computer science, in particular since they provide categorical frameworks for differential programming languages \cite{reversesemantics} and automatic differentiation \cite{catML}.

An all-important concept in differential calculus is the notion of \emph{Taylor series}. Recall that for a smooth function $f: \mathbb{R} \to \mathbb{R}$, its Taylor series at $0$ (sometimes also called its Maclaurin series) is the power series:
\[\mathcal{T}(f)(x) = \sum^\infty_{k=0} \frac{1}{k!}\cdot f^{(k)}(0)x^k\]
where $f^{(k)}$ is the $k$-th derivative of $f$. Famously, Taylor series are very useful, and a lot of information about a function can be gained from studying its Taylor series. It is often highly desirable for Taylor series to converge (in the usual real analytical sense) and also for functions to equal their Taylor series. 

The concept of Taylor series expansion is also important for both the differential $\lambda$-calculus and the resource calculus \cite{ehrhard2008uniformity,boudes2013characterization,ehrhard2006bohm,manzonetto2011bohm}. In the presence of countable infinite sums, one can give full Taylor expansions of $\lambda$-terms. In particular, Taylor expansion provides a linear approximation of ordinary application, so:
\[MN = \sum^\infty_{n=0} \frac{1}{n!} (\mathsf{D}^n M \cdot N^n) 0\] 
where $\mathsf{D}^n M \cdot N^n$ denotes the $n$-th derivative of $M$ applied $n$ times to $N$. In \cite{manzonetto2012categorical}, Manzonetto provides the categorical interpretation of this version of Taylor expansion in a Cartesian closed differential category with infinite sums. 

Taylor series expansion is also considered in Differential Linear Logic \cite{pagani2009inverse,boudes2013characterization,ehrhard2008uniformity}. In \cite{ehrhard2017introduction}, Ehrhard provides the categorical versions of these ideas in differential categories with countable sums. Ehrhard's approach to Taylor expansion in a differential category and Manzonetto's approach to Taylor expansion in a Cartesian closed differential category are the same when we consider the Taylor expansion of coKleisli maps (which is expected since recall that the coKleisli category of a differential category is a Cartesian differential category). Moreover, Taylor series expansion of coKleisli maps in a differential category played a fundamental role in developing \emph{codigging} for differential categories and Differential Linear Logic \cite{kerjean2023taylor}. Taylor series expansion was also considered in the coherent differentiation setting in \cite{ehrhard2023coherent} (though this framework, while possibly related, is quite different from Cartesian differential categories and the story of this paper). 

While the notion of Taylor series for the differential $\lambda$-calculus and Differential Linear Logic is fundamentally the same as the notion of Taylor series in classical differential calculus, there is a crucial difference. Indeed, to define Taylor series expansion for the differential $\lambda$-calculus or Differential Linear Logic, one first assumes that we are in a setting with countable infinite sums in the algebraic sense. While this is a perfectly fine thing to do in models coming from computer science, having infinite sums clashes with models coming from the analysis side of things. There are many important examples of Cartesian differential categories that do not have infinite sums and yet still have a well-defined notion of Taylor series, such as real smooth/entire functions, polynomials, etc. The objective of this paper is to provide a formal theory of Taylor series in an arbitrary Cartesian differential category in such a way that it also gives a unified story recapturing Taylor series expansion in differential calculus via convergence and for the differential $\lambda$-calculus via algebraic infinite sums.  

The key operation for developing Taylor series in Cartesian differential categories is the notion of \emph{higher-order derivatives}, which we review in Sec \ref{sec:CDC}. Then, in a setting where we can scalar multiply by positive rationals, we use higher-order derivatives to define the \emph{Taylor monomials} of maps in a Cartesian differential category. The sum of these Taylor monomials gives the \emph{Taylor polynomials} of maps. In Sec \ref{sec:TPOLY}, we develop the theory of Taylor polynomials in a Cartesian differential category and show that the Taylor polynomials form a sub-Cartesian differential category. In Sec \ref{sec:DPOLY}, we also discuss the intermediate notion of \emph{differential polynomials}, which are the maps in a Cartesian differential category whose higher-order derivative is eventual zero. We will see that every Taylor polynomial is a differential polynomial, but also explain why the converse is not necessarily true. 

In Sec \ref{sec:ultra}, we define an ultrapseudometric on the homsets of a Cartesian differential category where the distance between two maps is given by comparing their Taylor monomials. This ultrapseudometric is an analogue of the metric for power series that makes the formal infinite sum of a power series converge. However, in an arbitrary Cartesian differential category, two maps can have the same Taylor monomials but not be equal. We say that a Cartesian differential category is \emph{Taylor} if maps are completely determined by their Taylor monomials. The main result is that in a Taylor Cartesian differential category, the ultrapseudometric is an ultrametric, which then implies that the sequence of Taylor polynomials of any map converges to said map. In other words, in a Taylor Cartesian differential category, every map is equal to its Taylor series. We also explain how we obtain a Taylor Cartesian differential category from a Cartesian differential category by taking equivalence classes on maps with the same Taylor monomials. As running examples throughout the paper, we consider polynomials over a commutative semiring, real smooth functions, and the coKleisli category of a differential category. In particular, we will also see how for the coKleisli category of a differential category, this story of ultrametrics and Taylor series from the Cartesian differential category point of view corresponds precisely to the same named notions used in \cite{kerjean2023taylor} to construct codigging and exponential functions in differential categories. 

We also explain why if maps were equal to their Taylor series via some other notion of convergence or algebraic infinite sums, then the Cartesian differential category is indeed Taylor. In particular, in Sec \ref{sec:infinite}, we show how being Taylor recaptures precisely Manzonetto's notion of modelling Taylor expansion in a Cartesian closed differential category from \cite{manzonetto2012categorical}. 

\section{Cartesian Differential Categories}\label{sec:CDC}

In this background section, we review the basics of Cartesian differential categories, primarily to set up terminology and notation. We refer the reader to \cite{blute2009cartesian,cockett2020linearizing,garner2020cartesian} for a more in-depth introduction to Cartesian differential categories. 

In an arbitrary category $\mathbb{X}$, objects will be denoted by capital letters $A,B,X,Y$, etc. and maps by minuscule letters $f,g,h$, etc. We denote homsets by $\mathbb{X}(A,B)$, maps as arrows $f: A \to B$, identity maps as $1_A: A \to A$, and we use the classical notation for composition, $\circ$, as opposed to diagrammatic order which was used in other papers on Cartesian (reverse) differential categories, such as in \cite{blute2009cartesian}. For a category with finite products, we denote the product by $\times$, the projection maps by ${\pi_j: A_0 \times \hdots \times A_n \to A_j}$, and the pairing operation as $\langle -, \hdots, - \rangle$.

In this paper, we will work with Cartesian differential categories relative to a fixed commutative semiring $k$, as was done in \cite{garner2020cartesian}. As such, in this relative setting, the underlying structure of a Cartesian differential category is that of a \emph{Cartesian left $k$-linear category} \cite[Sec 2.1]{garner2020cartesian}, which can be described as a category with finite products which is \emph{skew}-enriched over the category of $k$-modules and $k$-linear maps between them \cite{garner2020cartesian}. Essentially, this means that each homset is a $k$-module, so we have zero maps, can take the sum of maps and also scalar multiply maps by elements of $k$, but also allow for maps which do not preserve this $k$-module structure. Maps which do are called \emph{$k$-linear maps}. Explicitly, a \textbf{left $k$-linear category} is a category $\mathbb{X}$ such that each homset $\mathbb{X}(A,B)$ is a $k$-module with scalar multiplication $\cdot : k \times  \mathbb{X}(A,B) \to  \mathbb{X}(A,B)$, addition ${+: \mathbb{X}(A,B) \times \mathbb{X}(A,B) \to \mathbb{X}(A,B)}$, and zero $0 \in \mathbb{X}(A,B)$; and such that pre-composition preserves the $k$-linear structure: $(r \cdot f + s \cdot g) \circ x = r \cdot (f \circ x) + s \cdot (g \circ x)$. A map $f: A\to B$ is said to be \textbf{$k$-linear} if post-composition by $f$ preserves the $k$-linear structure: $f \circ (r \cdot x + s \cdot y) =   r \cdot (f \circ x) + s \cdot (f \circ y)$. Then a \textbf{Cartesian left $k$-linear category} is a left $k$-linear category $\mathbb{X}$ such that $\mathbb{X}$ has finite products and all projection maps $\pi_j$ are $k$-linear. We note that when taking $k=\mathbb{N}$, the semiring of natural numbers, (Cartesian) left $\mathbb{N}$-linear categories and their $\mathbb{N}$-linear maps are the same thing as (Cartesian) left additive categories and their additive maps from \cite[Def 1.1.1 \& 1.2.1]{blute2009cartesian}. 

A Cartesian differential category is a Cartesian $k$-linear category that comes equipped with a \emph{differential combinator}, an operator that sends maps to their derivative. When taking $k= \mathbb{N}$, a Cartesian $\mathbb{N}$-differential category is precisely the same thing as the original definition from \cite[Def 2.1.1]{blute2009cartesian}. It is important to note that in this paper, we follow the now more widely used convention for Cartesian differential categories, which flips the convention used in early works such as in \cite{blute2009cartesian,manzonetto2012categorical}, so that the linear argument of the derivative is in the second argument rather than in the first. 

\begin{definition}\label{cartdiffdef} A \textbf{Cartesian $k$-differential category} \cite[Sec 2.2]{garner2020cartesian} is a Cartesian left $k$-linear category $\mathbb{X}$ equipped with a \textbf{differential combinator} $\mathsf{D}$, which is a family of functions (indexed by pairs of objects of $\mathbb{X}$) $\mathsf{D}: \mathbb{X}(A,B) \to \mathbb{X}(A \times A,B)$ such that the seven axioms \textbf{[CD.1]} to \textbf{[CD.7]} described below hold. For a map $f: A \to B$, the map $\mathsf{D}[f]: A \times A \to B$ is called the \textbf{derivative} of $f$. 
\end{definition}

The seven axioms of a differential combinator are analogues of the basic properties of the total derivative from multivariable differential calculus. They are that: \textbf{[CD.1]} the differential combinator is a $k$-linear morphism, \textbf{[CD.2]} derivatives are $k$-linear in their second argument, \textbf{[CD.3]} identity maps and projections are \emph{differential} linear, \textbf{[CD.4]} the derivative of a pairing is the pairing of the derivatives, \textbf{[CD.5]} the chain rule for the derivative of a composition, \textbf{[CD.6]} derivatives are \emph{differential} linear in their second argument, and lastly \textbf{[CD.7]} the symmetry of the mixed \emph{partial} derivatives. We will recall the notions of differential linearity and partial derivatives below. Cartesian differential categories have a very practical term calculus \cite[Sec 4]{blute2009cartesian}, which is highly useful for writing definitions and proofs. So, we write the derivative as follows:
\[\mathsf{D}[f](a,b) := \dfrac{\mathsf{d}f(x)}{\mathsf{d}x}(a) \cdot b\]
Then the axioms of the differential combinator are that: 
 \begin{enumerate}[{\bf [CD.1]}]
 \item $\dfrac{\mathsf{d} r \cdot f(x) + s \cdot g(x)}{\mathsf{d}x}(a) \cdot b = r \cdot \dfrac{\mathsf{d}f(x)}{\mathsf{d}x}(a) \cdot b + s \cdot \dfrac{\mathsf{d}g(x)}{\mathsf{d}x}(a) \cdot b$;
 \item $\dfrac{\mathsf{d}f(x)}{\mathsf{d}x}(a) \cdot (r\cdot b + s \cdot c) = r\cdot \dfrac{\mathsf{d}f(x)}{\mathsf{d}x}(a) \cdot b + s \cdot \dfrac{\mathsf{d}f(x)}{\mathsf{d}x}(a) \cdot b$
 \item $ \dfrac{\mathsf{d}x_j}{\mathsf{d}x_j}(a_0, \hdots, a_n) \cdot (b_0, \hdots, b_n) = b_j$ 
 \item $\dfrac{\mathsf{d}\left \langle f_0(x), \hdots, f_n(x) \right \rangle}{\mathsf{d}x}(a) \cdot b = \left \langle \dfrac{\mathsf{d}f_0(x)}{\mathsf{d}x}(a) \cdot b, \hdots, \dfrac{\mathsf{d}f_n(x)}{\mathsf{d}x}(a) \cdot b \right \rangle$
 \item $\dfrac{\mathsf{d}g\left(f(x) \right)}{\mathsf{d}x}(a) \cdot b = \dfrac{\mathsf{d}g(y)}{\mathsf{d}y}(f(a)) \cdot \left( \dfrac{\mathsf{d}f(x)}{\mathsf{d}x}(a) \cdot b \right)$
 \item $\dfrac{\mathsf{d}\dfrac{\mathsf{d}f(x)}{\mathsf{d}x}(y) \cdot z}{\mathsf{d}(y,z)}(a,0) \cdot (0,b) = \dfrac{\mathsf{d}f(x)}{\mathsf{d}x}(a) \cdot b$
 \item $\dfrac{\mathsf{d}\dfrac{\mathsf{d}f(x)}{\mathsf{d}x}(y) \cdot z}{\mathsf{d}(y,z)}(a,b) \cdot (c,d) = \dfrac{\mathsf{d}\dfrac{\mathsf{d}f(x)}{\mathsf{d}x}(y) \cdot z}{\mathsf{d}(y,z)}(a,c) \cdot (b,d) $
 \end{enumerate}

\begin{example}\label{ex:CDC} \normalfont Here are some well-known examples of Cartesian differential categories: 
\begin{enumerate}[{\em (i)}]
\item \label{ex:poly} Let $k\text{-}\mathsf{POLY}$ be the category whose objects are $n \in \mathbb{N}$ and where a map ${P: n \to m}$ is an $m$-tuple of polynomials in $n$ variables, that is, $P = \langle p_1(\vec x), \hdots, p_m(\vec x) \rangle$ with $p_i(\vec x) \in k[x_1, \hdots, x_n]$. Then $k\text{-}\mathsf{POLY}$ is a Cartesian $k$-differential category where the differential combinator is given by the standard differentiation of polynomials. So for $P = \langle p_1(\vec x), \hdots, p_m(\vec x) \rangle: n \to m$, its derivative $\mathsf{D}[P]: n \times n \to m$ is defined as the tuple of polynomials $\mathsf{D}[P] = \left(\mathsf{D}[p_1](\vec x, \vec y), \hdots,\mathsf{D}[p_m](\vec x, \vec y) \right)$ where $\mathsf{D}[p_j](\vec x, \vec y)$ is the sum of the partial derivatives of $p_j(\vec x)$, that is, $\mathsf{D}[p_j](\vec x, \vec y) = \sum^n_{i=1} \frac{\partial p_j}{\partial x_i}(\vec x) y_i \in k[x_1, \hdots, x_n, y_1, \hdots, y_n]$. 
\item \label{ex:smooth} Let $\mathbb{R}$ be the set of real numbers. Define $\mathsf{SMOOTH}$ as the category whose objects are the Euclidean spaces $\mathbb{R}^n$ and whose maps are smooth functions between them. $\mathsf{SMOOTH}$ is a Cartesian $\mathbb{R}$-differential category where the differential combinator is defined as the total derivative of a smooth function. For a smooth function ${F: \mathbb{R}^n \to \mathbb{R}^m}$, which is in fact an $m$-tuple $F = \langle f_1, \hdots, f_m \rangle$ of smooth functions $f_i: \mathbb{R}^n \to \mathbb{R}$, its derivative ${\mathsf{D}[F]: \mathbb{R}^n \times \mathbb{R}^n \to \mathbb{R}^m}$ is defined as $\mathsf{D}[F] = \left(\mathsf{D}[f_1](\vec x, \vec y), \hdots,\mathsf{D}[f_m](\vec x, \vec y) \right)$ where $\mathsf{D}[f_j]: \mathbb{R}^n \times \mathbb{R}^n \to \mathbb{R}$ is defined as the sum of partial derivatives of $f_j$, so $\mathsf{D}[f_j](\vec x, \vec y) = \sum^n_{i=1} \frac{\partial f_1}{\partial x_i}(\vec x) y_i$. Note that $\mathbb{R}\text{-}\mathsf{POLY}$ is a sub-Cartesian differential category of $\mathsf{SMOOTH}$. 
\item Important examples of Cartesian differential categories are the coKleisli categories of \textbf{differential categories} \cite{blute2006differential,Blute2019}. Briefly, a differential category is, amongst other things, a symmetric monoidal category $\mathbb{X}$ with a comonad $\oc$ which comes equipped with a natural transformation $\partial_A: \oc A \otimes A \to \oc A$, called the \textbf{deriving transformation} \cite[Def 7]{Blute2019}, that satisfies analogues of the basic rules of differentiation such as the chain rule and the product rule. CoKleisli maps $f: \oc A \to B$ are interpreted as smooth maps from $A$ to $B$, where the derivative of $f$ is given by pre-composing with the deriving transformation, $f \circ \partial_A: \oc A \otimes A \to B$ \cite[Def 2.3]{Blute2019}. This can be made precise by saying that for a differential category $\mathbb{X}$ with finite products, its coKleisli category $\mathbb{X}_\oc$ is a Cartesian differential category where the differential combinator $\mathsf{D}$ is defined using the deriving transformation $\partial$ \cite[Prop 3.2.1]{blute2009cartesian}. For more details on differential categories, we invite the reader to see \cite{blute2006differential,Blute2019,ehrhard2017introduction,garner2020cartesian,kerjean2023taylor}.
\end{enumerate} 
See \cite{cockett2020linearizing,garner2020cartesian} for lists of more examples of Cartesian differential categories. 
\end{example}

There are two important classes of maps in a Cartesian differential category that are worth mentioning: \emph{differential} linear maps \cite[Definition 2.2.1]{blute2009cartesian} and \emph{differential} constant maps \cite[Sec 6]{lemay2022properties}. Differential constants are maps whose derivative is zero, while differential linear maps are maps whose derivatives are themselves. 

\begin{definition} \label{def:dlin+con} In a Cartesian $k$-differential category, a map $f: A \to B$ is:
    \begin{enumerate}[{\em (i)}]
    \item a $\mathsf{D}$-constant \cite[Def 6.1]{lemay2022properties} if $\dfrac{\mathsf{d}f(x)}{\mathsf{d}x}(a) \cdot b =0$
\item $\mathsf{D}$-linear \cite[Def 2.7]{garner2020cartesian} if $\dfrac{\mathsf{d}f(x)}{\mathsf{d}x}(a) \cdot b = f(b)$. 
\end{enumerate}
\end{definition}

Properties of differential constants can be found in \cite[Sec 6]{lemay2022properties}, while properties of differential linear maps can be found in \cite[Lemma 2.6]{cockett2020linearizing}. 

Two essential operations which can be derived from the total derivative are \emph{partial} derivatives and \emph{higher-order} derivatives. Starting with partial derivative, given a map of, say, type $f: C_1 \times A \times C_2 \to B$, we'd like to take the derivative of $f$ with respect to $A$ while keeping the others constant. In classical differential calculus, partial derivatives are obtained by inserting zeroes in the appropriate vector argument of the total derivative. The same idea is true in Cartesian differential categories, where we obtain partial derivatives by inserting zeroes into the total derivative \cite[Sec 4.5]{blute2009cartesian}.

\begin{definition} In a Cartesian $k$-differential category, for a map $f: C_1 \times A \times C_2 \to B$, its \textbf{partial derivative} \cite[Def 2.7]{garner2020cartesian} in $A$ is the map $\mathsf{D}^{C_1 \times \_ \times C_2}[f]: C_1 \times A \times C_2 \times A \to B$, which in term calculus is written as:
 \[\mathsf{D}^{C_1 \times \_ \times C_2}[f](c_1,a_1,c_2,a_2) = \dfrac{\mathsf{d}f(c_1,x,c_2)}{\mathsf{d}x}(a_1) \cdot a_2\]
 and is defined as follows:
\[\dfrac{\mathsf{d}f(c_1,x,c_2)}{\mathsf{d}x}(a_1) \cdot a_2 = \dfrac{\mathsf{d}f(v,x,u)}{\mathsf{d}(v,x,u)}(c_1,a_1, c_2) \cdot (0,a_2,0)\]
Moreover, we say that $f: C_1 \times A \times C_2 \to B$ is: 
     \begin{enumerate}[{\em (i)}]
    \item $\mathsf{D}$-constant in $A$ if $\dfrac{\mathsf{d}f(c_1,x,c_2)}{\mathsf{d}x}(a_1) \cdot a_2 =0$
\item $\mathsf{D}$-linear in $A$ if $\dfrac{\mathsf{d}f(c_1,x,c_2)}{\mathsf{d}x}(a_1) \cdot a_2 = f(c_1,a_2,c_2)$. 
\end{enumerate}
\end{definition}

Let's now discuss \emph{higher-order derivatives}, which will play a central role in the story of this paper. For $n \in \mathbb{N}$, denote $A^{\times^n}$ as a shorthand for the product of $n$-copies of $A$, with the convention that $A^{\times^0} = \ast$ and $A^{\times^1} = A$.
Now for a map $f: A \to B$, applying the differential combinator $n$-times results in a map of type $\mathsf{D}^n[f]: A^{\times^{2^n}} \to B$ called the \textbf{$n$-th total derivative} of $f$. However, as explained in \cite[Sec 3.1]{garner2020cartesian}, due to the fact that total derivatives are equal to the sum of the partial derivatives \cite[Lemma 4.5.1]{blute2009cartesian} and also \textbf{[CD.6]}, there is a lot of extra redundant information in $\mathsf{D}^n[f]$. For example, the second total derivative can be worked out to be:
\[\mathsf{D}^2[f](a,b,c,d) = \dfrac{\mathsf{d}\dfrac{\mathsf{d}f(x)}{\mathsf{d}x}(y) \cdot b}{\mathsf{d}y}(a) \cdot c +  \dfrac{\mathsf{d}f(x)}{\mathsf{d}x}(a) \cdot d \]
So we see that $\mathsf{D}^2[f]$ has a $\mathsf{D}[f]$ summand -- which does not tell us any new information about $f$. Instead, all the new information comes from differentiating the first argument repeatedly. 

\begin{definition} In a Cartesian $k$-differential category, for a map $f: A \to B$ and every $n \in \mathbb{N}$, the \textbf{$n$-th derivative} \cite[Def 3.1]{garner2020cartesian} of $f$ is the map $\partial^{(n)}[f]: A \times A^{\times^n} \to B$, which is written in the term calculus as:
\[\partial^{(n)}[f](a_0, a_1, \hdots, a_n) := \dfrac{\mathsf{d}^{(n)} f(x)}{\mathsf{d}x}(a_0) \cdot a_1 \cdot \hdots \cdot a_n\]
and is defined inductively as: 
\begin{gather*}
  \dfrac{\mathsf{d}^{(0)} f(x)}{\mathsf{d}x}(a_0) = f(a_0) \qquad  \dfrac{\mathsf{d}^{(n+1)} f(x)}{\mathsf{d}x}(a_0) \cdot a_1 \cdot \hdots \cdot a_n \cdot a_{n+1}
 = \dfrac{\mathsf{d} \dfrac{\mathsf{d}^{(n)} f(x)}{\mathsf{d}x}(y) \cdot a_1 \cdot \hdots \cdot a_n  }{\mathsf{d}y} (a_0) \cdot a_{n+1}
\end{gather*}
\end{definition}

Here are now some of the main identities for higher-order derivatives, where in particular, we highlight that \textbf{[HD.1]} to \textbf{[HD.7]} are the higher-order versions of \textbf{[CD.1]} to \textbf{[CD.7]}. As such, \textbf{[HD.5]} is Faà di Bruno's Formula, which expresses the higher-order chain rule. Let's introduce some notation to help write down Faà di Bruno's Formula. For every $n \in \mathbb{N}$, let $[0]=\emptyset$ and let $[n+1] =\lbrace 1< \hdots < n+1 \rbrace$. Now for every subset $I = \lbrace i_1 < \hdots<  i_m \rbrace \subseteq [n+1]$, for a vector $x = (x_1, \hdots, x_{n+1})$, define $\vec x_I = (x_{i_1}, \hdots, x_{i_m})$. Lastly, we denote a \emph{non-empty} partition of $[n+1]$ as $[n+1] = A_1 \vert \hdots \vert A_k$, and let $\vert A_j \vert$ be the cardinality of $A_j$. Then Faà di Bruno's Formula \cite[Lemma 3.14]{garner2020cartesian} for the $n+1$-th derivative is given as a sum over the non-empty partitions of $[n+1]$. 

\begin{lemma}\label{lem:HD} \cite[Sec 3]{garner2020cartesian} In a Cartesian $k$-differential category: 
\begin{enumerate}[{\bf [HD.1]}]
\setcounter{enumi}{-1}
\item If $f$ is a $\mathsf{D}$-constant then $\partial^{n+1}[f]=0$ for all $n \in \mathbb{N}$;
\item  $\partial^n[r\cdot f + s \cdot g] = r \cdot \partial^n[f] + s \cdot \partial^n[g]$ for all $r,s \in k$ 
\item $\partial^n[f]$ is $k$-linear in each of its last $n$-arguments (i.e. is $k$-multilinear in its last $n$-arguments); 
\item If $f$ is $\mathsf{D}$-linear then $\partial^{n+2}[f]=0$ for all $n \in \mathbb{N}$; 
\item $\partial^n[\left\langle f_0, \hdots, f_n \right \rangle] = \left \langle  \partial^n[f_0], \hdots, \partial^n[f_n] \right \rangle$; 
\item The following equality holds: 
\begin{gather*}\dfrac{\mathsf{d}^{(n)} g(f(x))}{\mathsf{d}x}(a_0) \cdot a_1 \cdot \hdots \cdot a_{n} \\
= \sum \limits_{ [n]=A_1 \vert \hdots \vert A_k}  \dfrac{\mathsf{d}^{(k)} g(z)}{\mathsf{d}z}(a_0) \cdot \left( \dfrac{\mathsf{d}^{\left(  \vert A_1 \vert  \right)} f(x) }{\mathsf{d}x} (a_0) \cdot \vec a_{A_1} \right) \cdot \hdots \cdot  \left( \dfrac{\mathsf{d}^{\left(  \vert A_k \vert  \right)} f(x) }{\mathsf{d}x} (a_0) \cdot \vec a_{A_k} \right)
\end{gather*}
\item $\partial^n[f]$ is $\mathsf{D}$-linear in each of its last $n$-arguments (i.e. is $\mathsf{D}$-multilinear in its last $n$-arguments);
\item $\partial^n[f]$ is symmetric in its last $n$-arguments;
\item The following equalities hold: 
\begin{gather*}
    \dfrac{\mathsf{d} \dfrac{\mathsf{d}^{(n)}f(u)}{\mathsf{d}u}(x_0) \cdot x_1 \cdot \hdots\cdot x_n}{\mathsf{d}(x_0, x_1, \hdots, x_n)}(a_0, a_1, \hdots, a_n) \cdot (b_0, b_1, \hdots, b_n) \\
    =  \dfrac{\mathsf{d}^{(n)} \dfrac{\mathsf{d}f(u)}{\mathsf{d}u}(x) \cdot y}{\mathsf{d}(x,y)}(a_0,b_0) \cdot (a_1,b_1) \cdot \hdots \cdot (a_n,b_n) \\
    = \dfrac{\mathsf{d}^{(n+1)} f(x)}{\mathsf{d}x}(a_0) \cdot a_1 \cdot \hdots \cdot a_n \cdot b_0 + \sum\limits^n_{i=1} \dfrac{\mathsf{d}^{(n)} f(x)}{\mathsf{d}x}(a_0) \cdot a_1 \cdot \hdots a_{i-1} \cdot b_i \cdot a_{i+1} \cdot \hdots \cdot a_n
\end{gather*}
\end{enumerate}
\end{lemma}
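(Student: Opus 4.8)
The plan is to prove each of \textbf{[HD.0]}--\textbf{[HD.8]} by induction on $n$, using at the inductive step the recursive clause of the definition of $\partial^{(n)}$: in the term calculus, $\partial^{(n+1)}[f]$ is obtained by differentiating $\partial^{(n)}[f]$ in its first argument while the remaining $n$ arguments ride along as parameters. Thus each step peels off a single application of the differential combinator, and one invokes the matching first-order axiom \textbf{[CD.$i$]}; the base cases are immediate from $\partial^{(0)}[f] = f$ and $\partial^{(1)}[f] = \mathsf{D}[f]$.

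I would treat \textbf{[HD.6]}, \textbf{[HD.7]}, and \textbf{[HD.8]} together by a simultaneous induction on $n$, since they encode the single fact that $\partial^{(n)}[f]$ is a symmetric $\mathsf{D}$-multilinear form in its last $n$ slots. Unwinding the recursive definition, differentiating $\partial^{(n)}[f]$ once more brings in the symmetry of partial derivatives \textbf{[CD.7]} (to commute the fresh differentiation past the existing slots) together with the $\mathsf{D}$-linearity content of \textbf{[CD.6]}; combined with the decomposition of a total derivative as a sum of its partial derivatives \cite[Lemma 4.5.1]{blute2009cartesian}, this yields the three-way equality of \textbf{[HD.8]}, and from that equality the $\mathsf{D}$-multilinearity \textbf{[HD.6]} and the symmetry \textbf{[HD.7]} in the last $n$ arguments are read off directly. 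The $k$-multilinearity \textbf{[HD.2]} follows by the same induction with \textbf{[CD.1]}--\textbf{[CD.2]} in place of \textbf{[CD.6]}.

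The remaining identities are short. \textbf{[HD.1]} follows by induction from \textbf{[CD.1]} (the combinator is $k$-linear), pushing scalars and sums through the iterated partials; \textbf{[HD.4]} follows the same way from \textbf{[CD.4]}, since pairings differentiate componentwise at every stage. For \textbf{[HD.0]}: if $f$ is a $\mathsf{D}$-constant then $\partial^{(1)}[f] = 0$, so every higher derivative is obtained by differentiating the zero map, giving $\partial^{(n+1)}[f] = 0$ for all $n$. For \textbf{[HD.3]}: if $f$ is $\mathsf{D}$-linear then $\partial^{(1)}[f](a_0,a_1) = f(a_1)$ is independent of $a_0$, hence is a $\mathsf{D}$-constant in its first argument, so $\partial^{(2)}[f] = 0$ and then $\partial^{(n+2)}[f] = 0$ by the same argument as for \textbf{[HD.0]}.

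The main obstacle is \textbf{[HD.5]}, Fa\`a di Bruno's Formula, which I would again prove by induction on $n$. Granting the formula for $\partial^{(n)}[g \circ f]$ as a sum over the non-empty partitions of $[n]$, I would differentiate once more using \textbf{[HD.8]} and the Leibniz/product rule packaged inside \textbf{[CD.5]}--\textbf{[CD.6]}, applied to each summand of the partition sum. Differentiating the outer $\partial^{(k)}[g]$-factor of a summand produces a term in which $\lbrace n+1 \rbrace$ is adjoined as a new singleton block of the partition (raising $k$ by one), while differentiating one of the inner $\partial^{(\vert A_j\vert)}[f]$-factors produces a term in which $n+1$ is absorbed into the block $A_j$ (raising $\vert A_j\vert$ by one). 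The combinatorial heart of the proof is then to check that these two moves, applied across all partitions of $[n]$, produce every partition of $[n+1]$ exactly once; using \textbf{[HD.6]} and \textbf{[HD.7]} to put each resulting term into the stated normal form, one recovers precisely the sum over partitions of $[n+1]$. Everything outside this combinatorial accounting is routine bookkeeping of the argument slots.
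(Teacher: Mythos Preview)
The paper does not actually prove this lemma: it is stated with the citation \cite[Sec 3]{garner2020cartesian} and no proof is given, so there is nothing in the paper to compare your argument against directly. Your sketch is the standard inductive strategy one finds in the cited reference: peel off one derivative at a time using the recursive clause for $\partial^{(n+1)}$ and invoke the matching first-order axiom \textbf{[CD.$i$]}, with the Fa\`a di Bruno case \textbf{[HD.5]} handled by the usual partition-extension bijection (every partition of $[n+1]$ arises from a unique partition of $[n]$ either by adjoining $\{n+1\}$ as a new block or by inserting $n+1$ into an existing block). That combinatorial accounting, together with \textbf{[HD.6]}--\textbf{[HD.8]} to normalise argument slots, is exactly what is needed, and your treatment of \textbf{[HD.0]}, \textbf{[HD.3]} via the ``derivative of zero is zero'' observation is correct. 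In short: your proposal is sound and matches the approach of the source the paper defers to.
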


\section{Differential Polynomials}\label{sec:DPOLY}

In this section, we introduce the concept of \emph{differential polynomials} in a Cartesian differential category, which will be a useful intermediate concept for the story of Taylor series. Classically, polynomials are defined, of course, using multiplication, sums, and scalar multiplication. However, while a Cartesian differential category has access to sums and scalar multiplication, an arbitrary Cartesian differential category may not have a proper notion of multiplication of maps. Instead, another way of characterizing polynomials amongst the smooth functions is that they are smooth functions whose $n$-th derivative is zero. This definition of polynomials can be easily given in a Cartesian differential category. 

\begin{definition} In a Cartesian $k$-differential category, a \textbf{$\mathsf{D}$-polynomial} is a map $p: A \to B$ such that there is a $n \in \mathbb{N}$ such that $\partial^{(n+1)}[p] =0$. For a $\mathsf{D}$-polynomial $p$, the smallest $n \in \mathbb{N}$ such that $\partial^{(n+1)}[p] =0$ is called the \textbf{$\mathsf{D}$-degree} of $p$, which we denote as $\mathsf{deg}(p)$.
\end{definition}

Differential polynomials are closed under all the basic operations of a Cartesian differential category. As such, the differential polynomials of a Cartesian differential category form a Cartesian differential category. 

\begin{lemma}\label{lem:Dpoly} In a Cartesian $k$-differential category,  
    \begin{enumerate}[{\em (i)}]
    \item \label{Dpoly-0} $\mathsf{D}$-constants are $\mathsf{D}$-polynomials of $\mathsf{D}$-degree $0$; 
\item \label{Dpoly-1} $\mathsf{D}$-linear maps are $\mathsf{D}$-polynomials, and non-zero $\mathsf{D}$-linear maps have $\mathsf{D}$-degree $1$; 
\item \label{Dpoly-2} Identity maps and projections maps are $\mathsf{D}$-polynomials, and for non-terminal objects they have $\mathsf{D}$-degree $1$;
\item \label{Dpoly-3} If $p: A \to B$ and $q: A \to B$ are $\mathsf{D}$-polynomials, then $r \cdot p + s \cdot q$ is a $\mathsf{D}$-polynomial whose $\mathsf{deg}(r \cdot p+ s\cdot q) \leq \mathsf{max}\lbrace \mathsf{deg}(p), \mathsf{deg}(q) \rbrace$;
\item \label{Dpoly-5} If $p_j: A \to B_j$ are $\mathsf{D}$-polynomials, then $\langle p_0, \hdots, p_n \rangle: A \to B_0 \times \hdots \times B_n$ is a $\mathsf{D}$-polynomial where $\mathsf{deg}(\langle p_0, \hdots, p_n \rangle) = \mathsf{max}\lbrace \mathsf{deg}(p_j) \rbrace$;
\item \label{Dpoly-4} If $p: A \to B$ and $q: B \to C$ are $\mathsf{D}$-polynomials, then $q \circ p$ is a $\mathsf{D}$-polynomial where $\mathsf{deg}(p \circ q) \leq \mathsf{deg}(p)\mathsf{deg}(q)$;
\item \label{Dpoly-6} If $p: A \to B$ is a $\mathsf{D}$-polynomial, then $\mathsf{D}[p]: A \times A \to B$ is a $\mathsf{D}$-polynomial where $\mathsf{deg}\left( \mathsf{D}[p] \right) = \mathsf{deg}(p)$. 
\end{enumerate}
\end{lemma}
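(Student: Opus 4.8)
The plan is to verify each of the seven closure properties by unwinding the definition of $\mathsf{D}$-polynomial, namely that $p$ is a $\mathsf{D}$-polynomial iff $\partial^{(n+1)}[p] = 0$ for some $n$, and applying the higher-order identities \textbf{[HD.0]} through \textbf{[HD.6]} from Lemma~\ref{lem:HD}. For items \eqref{Dpoly-0} and \eqref{Dpoly-1}, I would invoke \textbf{[HD.0]} and \textbf{[HD.2]} directly: a $\mathsf{D}$-constant has $\partial^{(1)}[f] = 0$ so its degree is $0$, and a $\mathsf{D}$-linear map has $\partial^{(2)}[f] = 0$, so its degree is at most $1$; it is exactly $1$ precisely when $f \neq 0$, since $\partial^{(1)}[f](a_0) = f(a_0)$ by the base case of the definition of $n$-th derivative, so $\partial^{(1)}[f] = 0$ forces $f = 0$. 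Item \eqref{Dpoly-2} is the special case of \eqref{Dpoly-1} once one recalls (from \textbf{[CD.3]}) that identities and projections are $\mathsf{D}$-linear, with the degree-$1$ claim for non-terminal objects following because such a map is nonzero.

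For item \eqref{Dpoly-3}, I would apply \textbf{[HD.1]} (linearity of $\partial^{(n)}$ in $f$): choosing $N = \mathsf{max}\lbrace \mathsf{deg}(p), \mathsf{deg}(q)\rbrace$, both $\partial^{(N+1)}[p]$ and $\partial^{(N+1)}[q]$ vanish, hence $\partial^{(N+1)}[r\cdot p + s\cdot q] = r\cdot \partial^{(N+1)}[p] + s \cdot \partial^{(N+1)}[q] = 0$, giving the bound on the degree. Item \eqref{Dpoly-5} is analogous using \textbf{[HD.4]}: $\partial^{(n)}$ of a pairing is the pairing of the $\partial^{(n)}$'s, and a pairing is zero iff each component is zero, which gives the exact equality $\mathsf{deg}(\langle p_0, \hdots, p_n\rangle) = \mathsf{max}\lbrace\mathsf{deg}(p_j)\rbrace$ (both the upper bound and, by projecting, the lower bound).

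The main work is item \eqref{Dpoly-4}, the composition bound, and this is the step I expect to be the real obstacle. The idea is to use Faà di Bruno's formula \textbf{[HD.5]}: $\partial^{(n)}[q \circ p](a_0; a_1, \hdots, a_n)$ is a sum over non-empty partitions $[n] = A_1 \vert \hdots \vert A_k$ of terms of the form $\partial^{(k)}[q](p(a_0); \partial^{(|A_1|)}[p](a_0;\vec a_{A_1}), \hdots, \partial^{(|A_k|)}[p](a_0; \vec a_{A_k}))$. Set $d = \mathsf{deg}(p)$ and $e = \mathsf{deg}(q)$, and take $n = de + 1$. I claim every summand vanishes: in a partition of a set of size $de+1$ into $k$ blocks, if $k \geq e+1$ then $\partial^{(k)}[q] = \partial^{(e+1+\cdots)}[q] = 0$ (applying $\partial^{(e+1)}[q]=0$ together with \textbf{[HD.2]}/the inductive build-up of higher derivatives); otherwise $k \leq e$, and then by pigeonhole some block $A_j$ has $|A_j| \geq \lceil (de+1)/e \rceil \geq d+1$, so the inner factor $\partial^{(|A_j|)}[p](a_0; \vec a_{A_j}) = 0$, and since $\partial^{(k)}[q]$ is $k$-linear in its last $k$ arguments by \textbf{[HD.2]}, plugging a zero into one slot kills the whole term. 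Hence $\partial^{(de+1)}[q\circ p] = 0$, so $q \circ p$ is a $\mathsf{D}$-polynomial with $\mathsf{deg}(q \circ p) \leq de = \mathsf{deg}(p)\mathsf{deg}(q)$. The care needed here is to handle the edge cases $d = 0$ or $e = 0$ separately (a $\mathsf{D}$-constant composed with anything, or anything composed with a $\mathsf{D}$-constant, which is again a $\mathsf{D}$-constant by \textbf{[HD.0]}-type reasoning), and to make precise the claim that $\partial^{(e+1)}[q] = 0$ implies $\partial^{(k)}[q] = 0$ for all $k \geq e+1$, which follows from the inductive definition of the higher derivatives together with \textbf{[HD.8]}.

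Finally, item \eqref{Dpoly-6} follows from \textbf{[HD.8]}, which expresses $\partial^{(n)}$ of a derivative $\mathsf{D}[p]$ in terms of $\partial^{(n+1)}[p]$ and lower $\partial^{(n)}[p]$ with one argument perturbed; concretely, if $d = \mathsf{deg}(p)$ then I would show $\partial^{(d)}[\mathsf{D}[p]] = 0$ by feeding $\partial^{(d+1)}[p] = 0$ into that formula, giving $\mathsf{deg}(\mathsf{D}[p]) \leq d$, and conversely $\partial^{(d-1)}[\mathsf{D}[p]] \neq 0$ because it contains a $\partial^{(d)}[p] \neq 0$ summand that, by the multilinearity and symmetry clauses \textbf{[HD.2]} and \textbf{[HD.6]}, cannot be cancelled by the other summands — so $\mathsf{deg}(\mathsf{D}[p]) = d$ exactly. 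With all seven items in hand, the closure of $\mathsf{D}$-polynomials under the Cartesian-differential-category operations is established, which is the content of the lemma.
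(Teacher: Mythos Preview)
Your approach is essentially identical to the paper's: items (i)--(v) are dispatched via the corresponding \textbf{[HD.$j$]} identities, item (vi) via Fa\`a di Bruno plus pigeonhole on the partition, and item (vii) via \textbf{[HD.8]}. A couple of slips are worth correcting. In (ii) you cite \textbf{[HD.2]} but mean \textbf{[HD.3]}, and the claim ``$\partial^{(1)}[f](a_0)=f(a_0)$ by the base case'' is off: the base case is $\partial^{(0)}[f]=f$, while for $\mathsf{D}$-linear $f$ one has $\partial^{(1)}[f](a_0,a_1)=f(a_1)$, which is what actually gives the degree-$1$ claim.

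More seriously, your argument for (vii) has an off-by-one in both directions and a gap. To get $\mathsf{deg}(\mathsf{D}[p])\le d$ you must show $\partial^{(d+1)}[\mathsf{D}[p]]=0$, not $\partial^{(d)}[\mathsf{D}[p]]=0$; indeed $\partial^{(d)}[\mathsf{D}[p]]$ still involves the nonzero $\partial^{(d)}[p]$ terms from \textbf{[HD.8]}. For the lower bound you need $\partial^{(d)}[\mathsf{D}[p]]\neq 0$, and your ``cannot be cancelled by symmetry/multilinearity'' is not an argument as stated --- the summands in \textbf{[HD.8]} live in different slots and could a priori interact. The paper closes this gap with an explicit substitution rather than a non-cancellation claim: evaluating $\partial^{(k+1)}[\mathsf{D}[p]]$ at $(a_0,0),(0,a_1),(a_2,0),\ldots,(a_{k+1},0)$ kills every $\partial^{(k+1)}[p]$-summand except one and annihilates the $\partial^{(k+2)}[p]$-term, returning exactly $\partial^{(k+1)}[p](a_0,a_1,\ldots,a_{k+1})$. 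Hence if $\partial^{(k+1)}[\mathsf{D}[p]]=0$ for some $k<d$ then $\partial^{(k+1)}[p]=0$, contradicting minimality of $d$.
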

\begin{proof} These follow mostly from Lemma \ref{lem:HD}. So (\ref{Dpoly-0}) and (\ref{Dpoly-1}) are immediate from \textbf{[HD.0]} and \textbf{[HD.3]} respectively, and since \textbf{[CD.3]} tells us that identity maps and projections are $\mathsf{D}$-linear, (\ref{Dpoly-2}) follows from (\ref{Dpoly-1}). Both (\ref{Dpoly-3}) and (\ref{Dpoly-5}) are straightforward to check using \textbf{[HD.1]} and \textbf{[HD.4]} respectively. The remaining two require a bit more explanation. So for (\ref{Dpoly-4}), let $\mathsf{deg}(p)=n$ and $\mathsf{deg}(q)=m$. Consider $\partial^{(mn+1)}[q \circ p]$. Faà di Bruno's Formula \textbf{[HD.5]} tells us that $\partial^{(mn+1)}[q \circ p]$ is a sum indexed by non-empty partitions of $[mn+1]=A_1 \vert \hdots \vert A_k$, so we have that $1 \leq k \leq mn+1$. If $m< k$, then $\partial^{(k)}[q] =0$, so all the summands of partitions of size $m < k$ are zero. If $k \leq m$, then $n \leq \frac{mn+1}{k}$, which means that in the partition, there is some $A_j$ with $n < \vert A_j \vert$. So $\partial^{\vert A_j \vert}[p]=0$, and since by \textbf{[HD.2]} $\partial^{(k)}[q]$ is $k$-multilinear in its last $k$-arguments means that we will get zero for the summand of the partition. So we conclude that $\partial^{(mn+1)}[q \circ p]=0$, and thus $q \circ p$ is a $\mathsf{D}$-polynomial. Lastly for (\ref{Dpoly-6}), suppose that $\mathsf{deg}(p)=n$. Now \textbf{[HD.8]} essentially tells us that $\mathsf{D}$ and $\partial^{(-)}$ commute, up to permutation. So since $\partial^{(n+1)}[p]=0$, we get that $\partial^{(n+1)}[\mathsf{D}[p]]=0$ and so $\mathsf{D}[p]$ is a $\mathsf{D}$-polynomial with at least $\mathsf{deg}\left( \mathsf{D}[p] \right) \leq n$. Now suppose that $\mathsf{deg}\left( \mathsf{D}[p] \right)=k < n$, so $\partial^{(k+1)}[\mathsf{D}[p]]=0$. However, note that by \textbf{[HD.8]} and \textbf{[HD.2]}, we can recover $\partial^{(k+1)}[p]$ from $\partial^{(k+1)}[\mathsf{D}[p]]$ in the following way: 
\begin{gather*}
\dfrac{\mathsf{d}^{(k+1)} \dfrac{\mathsf{d}f(u)}{\mathsf{d}u}(x) \cdot y}{\mathsf{d}(x,y)}(a_0,0) \cdot (0,a_1) \cdot (a_2,0) \cdot \hdots \cdot (a_{k+1},0) = \dfrac{\mathsf{d}^{(k+1)} f(x)}{\mathsf{d}x}(a_0) \cdot a_1 \cdot \hdots \cdot a_{k+1}
\end{gather*}
However since by assumption $\partial^{(k+1)}[\mathsf{D}[p]]=0$, it follows that $\partial^{(k+1)}[p]=0$. But this is a contradiction by the definition of $\mathsf{deg}(p)=n$. So we must have that $\mathsf{deg}\left( \mathsf{D}[p] \right) = \mathsf{deg}(p)$, as desired. 
\end{proof}

\begin{corollary} For a Cartesian $k$-differential category $\mathbb{X}$, let $\mathsf{D}\text{-}\mathsf{POLY}[\mathbb{X}]$ be the sub-category of $\mathsf{D}$-polynomials of $\mathbb{X}$. Then $\mathsf{D}\text{-}\mathsf{POLY}[\mathbb{X}]$ is a sub-Cartesian $k$-differential category of $\mathbb{X}$. 
\end{corollary}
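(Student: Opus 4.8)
The plan is to assemble the statement directly from Lemma \ref{lem:Dpoly}, since every clause we need has already been isolated there; the proof is essentially bookkeeping. First I would observe that $\mathsf{D}\text{-}\mathsf{POLY}[\mathbb{X}]$ is a genuine subcategory of $\mathbb{X}$ on the same objects: identity maps are $\mathsf{D}$-polynomials by Lemma \ref{lem:Dpoly}(\ref{Dpoly-2}), and $\mathsf{D}$-polynomials are closed under composition by Lemma \ref{lem:Dpoly}(\ref{Dpoly-4}), while associativity and the unit laws are inherited from $\mathbb{X}$.

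Next I would check that each homset $\mathsf{D}\text{-}\mathsf{POLY}[\mathbb{X}](A,B)$ is a $k$-submodule of $\mathbb{X}(A,B)$: it contains the zero map, which is a $\mathsf{D}$-constant and hence a $\mathsf{D}$-polynomial by Lemma \ref{lem:Dpoly}(\ref{Dpoly-0}), and it is closed under $k$-linear combinations by Lemma \ref{lem:Dpoly}(\ref{Dpoly-3}). Since pre-composition in $\mathbb{X}$ already preserves the $k$-linear structure and the restricted operation is literally that of $\mathbb{X}$, $\mathsf{D}\text{-}\mathsf{POLY}[\mathbb{X}]$ is a left $k$-linear category. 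For the finite products, the subcategory has the same terminal object (its homsets into the terminal object are trivial, so consist of $\mathsf{D}$-polynomials), and $A \times B$ remains a product because pairings of $\mathsf{D}$-polynomials are $\mathsf{D}$-polynomials by Lemma \ref{lem:Dpoly}(\ref{Dpoly-5}), the projections are $\mathsf{D}$-polynomials by Lemma \ref{lem:Dpoly}(\ref{Dpoly-2}), and the universal property is inherited from $\mathbb{X}$; as the projections are $k$-linear in $\mathbb{X}$ they remain so here, making $\mathsf{D}\text{-}\mathsf{POLY}[\mathbb{X}]$ a Cartesian left $k$-linear category.

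Finally, Lemma \ref{lem:Dpoly}(\ref{Dpoly-6}) states exactly that if $p$ is a $\mathsf{D}$-polynomial then so is $\mathsf{D}[p]$, so the differential combinator of $\mathbb{X}$ restricts to a family of functions $\mathsf{D}\colon \mathsf{D}\text{-}\mathsf{POLY}[\mathbb{X}](A,B) \to \mathsf{D}\text{-}\mathsf{POLY}[\mathbb{X}](A \times A, B)$. Since this restricted combinator is the one from $\mathbb{X}$ applied to maps that happen to lie in the subcategory, and since all the maps appearing in \textbf{[CD.1]}--\textbf{[CD.7]} (derivatives, pairings, composites, projections, $k$-linear combinations) stay inside the subcategory by the closure properties just established, each of the seven axioms holds in $\mathsf{D}\text{-}\mathsf{POLY}[\mathbb{X}]$ because it holds in $\mathbb{X}$. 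Hence $\mathsf{D}\text{-}\mathsf{POLY}[\mathbb{X}]$ is a Cartesian $k$-differential category and the inclusion into $\mathbb{X}$ is a strict Cartesian differential functor. I do not expect a genuine obstacle: the only point requiring care is verifying that every operation used in the axioms preserves $\mathsf{D}$-polynomiality, and that is precisely the content of Lemma \ref{lem:Dpoly}.
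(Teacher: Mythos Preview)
Your proposal is correct and matches the paper's approach: the corollary is stated without proof precisely because it follows by bookkeeping from the clauses of Lemma~\ref{lem:Dpoly}, and you have spelled out exactly that bookkeeping.
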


\begin{example}\label{ex:DPOLY} \normalfont Here are the differential polynomials in our main examples: 
\begin{enumerate}[{\em (i)}]
\item Every map in $k\text{-}\mathsf{POLY}$ is a $\mathsf{D}$-polynomial, so $\mathsf{D}\text{-}\mathsf{POLY}[k\text{-}\mathsf{POLY}]=k\text{-}\mathsf{POLY}$. However, the $\mathsf{D}$-degree may not necessarily align with a polynomial's usual degree. Indeed, consider the case for $k=\mathbb{Z}_2$, the integers modulo $2$. Then for $p(x)= x^2$, we get that $\mathsf{D}[p](x,y) = 2xy =0$. So $x^2$ is a $\mathsf{D}$-constant in $\mathbb{Z}_2\text{-}\mathsf{POLY}$ so $\mathsf{deg}(x^2) =0$, which is clearly different from its usual degree of $2$. However, if $k$ is an integral domain (which recall means for $r,s \in k$, if $rs=0$ then $r=0$ or $s=0$) and has characteristic zero (which recall means that for all $n \in \mathbb{N}$, for $n+1 \in k$ we have that $n+1 \neq 0$), then the $\mathsf{D}$-degree does match the usual polynomial degree. 
\item The $\mathsf{D}$-polynomials in $\mathsf{SMOOTH}$ are precisely tuples of usual real polynomial functions, so $\mathsf{D}\text{-}\mathsf{POLY}[\mathsf{SMOOTH}]=\mathbb{R}\text{-}\mathsf{POLY}$.
\item The $\mathsf{D}$-polynomials in the coKleisli category of a differential category with finite products correspond precisely to the notion of polynomials in a differential category as defined by Ehrhard in \cite[Sec 4.1.1]{ehrhard2017introduction}. So define $\partial^{(n)}_A: \oc A \otimes A^{\otimes^n} \to \oc A$ inductively as $\partial^{(0)}_A=1_{\oc A}$ and $\partial^{(n+1)}_A= \partial_A \circ (\partial^{(n)}_A \otimes 1_A)$. Then $f: \oc A \to B$ is a $\mathsf{D}$-polynomial if and only if $f \circ \partial^{(n+1)}_A=0$. However, in \cite[Sec 4.1.1]{ehrhard2017introduction}, Ehrhard uses another formula for the composition of $\mathsf{D}$-polynomials, which involves positive rationals, and recaptures the notion of linear application in Differential Linear Logic \cite{pagani2009inverse,boudes2013characterization,ehrhard2008uniformity}. Here, by Lemma \ref{lem:Dpoly}, we get that the usual coKleisli composition of $\mathsf{D}$-polynomials is again a $\mathsf{D}$-polynomial, and so $\mathsf{D}\text{-}\mathsf{POLY}[\mathbb{X}_\oc]$ is an actual subcategory of the coKleisli category. While in well-behaved models, the coKleisli composition and Ehrhard's composition may correspond, in general, they need not be equal. 
\end{enumerate} 
\end{example}

\section{Taylor Differentials Polynomials}\label{sec:TPOLY}

In this section, we introduce the notion of \emph{Taylor differential monomials} and \emph{Taylor differential polynomials} of maps in a Cartesian differential category. 

In differential calculus, for a smooth function $f: \mathbb{R} \to \mathbb{R}$, its $n$-th Taylor monomial is defined as the $n$-th term in its Taylor series, that is, $\frac{1}{n!}\cdot f^{(n)}(0)x^n$, where $f^{(n)}$ is the $n$-th derivative of $f$. Then its $n$-th Taylor polynomial is defined as the sum of all $k\leq n$-th Taylor monomials, that is, $\sum^n_{k=0} \frac{1}{k!}\cdot f^{(k)}(0)x^k$. So, how do we generalize these formulas in a Cartesian differential category? The first thing to address is scalar multiplication by $\frac{1}{n!}$. So we need to be in a setting where we can scalar multiply by positive rationals. As such, we will need to assume that in our base commutative semiring $k$, every $n= 1 + \hdots +1 \in k$ is invertible. This amounts to saying that $k$ is a $\mathbb{Q}_{\geq 0}$-algebra, where $\mathbb{Q}_{\geq 0}$ is the semiring of positive rationals. We next need to address the $x^n$ part. As mentioned above, arbitrary Cartesian differential categories do not necessarily have a multiplication operation for maps. Luckily, this issue is solved thanks to the differential combinator. Indeed, note that $\partial^{(n)}[f]: \mathbb{R} \times \mathbb{R}^{\times^n} \to \mathbb{R}$ is $\partial^{(n)}[f](x_0, x_1, \hdots, x_n) =  f^{(n)}(x_0)x_1\hdots x_n$. So by inserting zero into the first argument and copying the second argument $n$ times, we get $\partial^{(n)}[f](0,x,\dots, x) = f^{(n)}(0)x^n$. So we recapture the $n$-th Taylor monomial using the differential combinator via $\frac{1}{n!} \cdot \partial^{(n)}[f](0,x,\dots, x)=\frac{1}{n!}\cdot f^{(n)}(0)x^n$. This is the formula we generalize to define Taylor differential monomials in the Cartesian differential setting. The sum of these Taylor differential monomials gives us the Taylor differential polynomials. 

For the remainder of this section, we assume that $k$ is a commutative $\mathbb{Q}_{\geq 0}$-algebra.

\begin{definition} In a Cartesian $k$-differential category, for a map $f: A \to B$ and every $n \in \mathbb{N}$, define $\mathcal{M}^{(n)}[f]: A \to B$ as the composite:
\[\mathcal{M}^{(n)}[f] := \dfrac{1}{n!} \cdot \partial^{(n)}[f] \circ \langle 0, 1_A, \hdots, 1_A \rangle\]
which in the term calculus is written as:
\[\mathcal{M}^{(n)}[f](x) = \dfrac{1}{n!} \cdot \dfrac{\mathsf{d}^{(n)} f(u)}{\mathsf{d}u}(0) \cdot x \cdot \hdots \cdot x\] 
and then define $\mathcal{T}^{(n)}[f]: A \to B$ as:
\[\mathcal{T}^{(n)}[f] = \sum^n_{k=0} \mathcal{M}^{(k)}[f]\] 
\end{definition}

We note that $\mathcal{M}^{(0)}[f]$ is given by evaluating $f$ at $0$, so $\mathcal{M}^{(0)}[f](a) = f(0)$, while $\mathcal{M}^{(1)}[f]$ is precisely the \textbf{linearization} of $f$ (also sometimes written as $\mathsf{L}[f]$), which is $\mathcal{M}^{(1)}[f](a) = \dfrac{\mathsf{d} f(x)}{\mathsf{d}x}(0) \cdot a$ \cite[Prop 3.6]{cockett2020linearizing}. 

\begin{example}\label{ex:DMONO} \normalfont Here are the Taylor differential monomials/polynomials of maps in our main examples: 
\begin{enumerate}[{\em (i)}]
\item  The $k$-th Taylor $\mathsf{D}$-monomial of a tuple of polynomials $P=\langle p_i(\vec x) \rangle^m_{i=1}: n \to m$ is $\mathcal{M}^{(k)}[ P] = \langle \mathcal{M}^{(k)}[p_i](\vec x)\rangle^m_{i=1}$ where $\mathcal{M}^{(k)}[p_i](\vec x) \in k[x_1, \hdots, x_n]$ are the all the degree $k$ monomials summands of $p_i(\vec x)$. Then the $j$-th Taylor $\mathsf{D}$-monomial of $P$ is $\mathcal{T}^{(j)}[ P] = \langle \mathcal{T}^{(j)}[p_i](\vec x)\rangle^m_{i=1}$ where $\mathcal{T}^{(j)}[p_i](\vec x) \in k[x_1, \hdots, x_n]$ is the sum of all degree $k \leq j$ monomial summands of $p_j(\vec x)$. In particular, for $p(x)=\sum^n_{k=0} a_k x^k$, we get that $\mathcal{M}^{(k)}[p](\vec x) = a_k x^k$ and $\mathcal{T}^{(j)}[p](x) = \sum^j_{k=0} a_k x^k$. 
\item For a smooth function $F= \langle f_i \rangle^m_{i=1}: \mathbb{R}^n \to \mathbb{R}^m$, its $k$-th Taylor $\mathsf{D}$-monomial is $\mathcal{M}^{(k)}[F](\vec x) = \langle \mathcal{M}^{(k)}[f_i](\vec x) \rangle^m_{i=1}$ where $\mathcal{M}^{(k)}[f_i](\vec x)$ is the usual $k$-th Taylor monomial of $f_i$ in the multivariable case:  
\[ \mathcal{M}^{(k)}[f_m](\vec x) =  \sum\limits_{\lbrace i_1, \hdots, i_k \rbrace \subseteq \lbrace 1, \hdots, n \rbrace} \frac{1}{k!}\cdot \frac{\partial^{(k)} f_m}{\partial x_{i_1} \hdots \partial x_{i_k} }(\vec 0)x_{i_1}\hdots x_{i_k} \]
Then the $j$-th Taylor $\mathsf{D}$-polynomial of $F$ is the tuple of the usual $j$-th Taylor polynomials of  $f_i$ in the multivariable case. In particular, for a smooth function $f: \mathbb{R} \to \mathbb{R}$, we get precisely that $\mathcal{M}^{(k)}[f](x) = \frac{1}{k!}\cdot f^{(k)}(0)x^k$ and $\mathcal{T}^{(j)}[f](x) = \sum^j_{k=0} \frac{1}{k!}\cdot f^{(k)}(0)x^k$. 
\item In a differential category, using the contraction and dereliction natural transformations, we obtain natural transformations of type $\oc A \to \oc A \otimes A^{\otimes^n}$. Post-composing this with the natural transformation $\partial^{(n)}_A: \oc A \otimes A^{\otimes^n} \to \oc A$ gives us the natural transformation $\mathsf{M}^{(n)}_A: \oc A \to \oc A$ as defined in \cite[Sec III.E]{kerjean2023taylor}. Then define the natural transformation $\mathsf{T}^{(n)}_A: \oc A \to \oc A$ as the sum $\mathsf{T}^{(n)}_A = \sum^n_{k=0} \mathsf{M}^{(k)}_A$, as defined in \cite[Sec 3.1]{ehrhard2017introduction}. Then in the coKleisli category, the $n$-th Taylor $\mathsf{D}$-monomial of a coKleisli map $f: \oc A \to B$ is worked out to be precisely $\mathcal{M}^{(n)}[ f ] = f \circ \mathsf{M}^{(n)}_A$, while its $n$-th Taylor $\mathsf{D}$-polynomial is $\mathcal{T}^{(n)}[ f ] = f \circ \mathsf{T}^{(n)}_A$. The natural transformations $\mathsf{M}^{(n)}$ and the Taylor $\mathsf{D}$-monomials of coKleisli maps played a central role in the story of \cite{kerjean2023taylor}, while the natural transformations $\mathsf{T}^{(n)}$ and the Taylor $\mathsf{D}$-polynomials of coKleisli maps were studied in detail in \cite{ehrhard2017introduction}. 
\end{enumerate} 
\end{example}

Here are some useful identities about Taylor differential monomials and Taylor differential polynomials:  

\begin{lemma}\label{lemma:monomial} In a Cartesian $k$-differential category: 
    \begin{enumerate}[{\em (i)}]
    \item \label{M-1} $\mathcal{M}^{(0)}[f]$ is a $\mathsf{D}$-constant;
    \item \label{M-2} $\mathcal{M}^{(1)}[f]$ is $\mathsf{D}$-linear;
    \item\label{M-3} $\mathcal{M}^{(n)}[ r \cdot f + s \cdot g ] = r \cdot \mathcal{M}^{(n)}[  f ] +  s \cdot \mathcal{M}^{(n)}[ g ]$;
    \item \label{M-9} $\mathcal{M}^{(n)}[f](r\cdot a + s \cdot b) = \frac{1}{n!} \cdot \sum\limits^n_{k=0} \frac{r^k s^{n-k}}{k! (n-k)!}\cdot \dfrac{\mathsf{d}^{(n+1)} f(x)}{\mathsf{d}x}(0) \cdot \underbrace{a \cdot \hdots \cdot a}_{\text{$k$-times}} \cdot \underbrace{b \cdot \hdots \cdot b}_{\text{$n-k$-times}}$
    \item \label{M-4} $\mathcal{M}^{(n)}\left[ \langle f_0, \hdots, f_n \rangle \right] = \left \langle \mathcal{M}^{(n)}[f_0], \hdots, \mathcal{M}^{(n)}[f_n] \right \rangle$; 
    \item \label{M-5} $\mathcal{M}^{(n)}[ g \circ f ](a) = \sum \limits_{n=m_1 + \hdots + m_k} \frac{1}{k!m_1!\hdots m_k!}\dfrac{\mathsf{d}^{(k)} g(z)}{\mathsf{d}z}(f(0)) \cdot \mathcal{M}^{(m_1)}[f](a) \cdot \hdots \cdot \mathcal{M}^{(m_k)}[f](a)$;
    \item \label{M-10} $\mathcal{M}^{(n)}\left[ \mathsf{D}[f] \right](a,b) = \dfrac{\mathsf{d} \mathcal{M}^{(n)}[f](x)}{\mathsf{d}x} (a) \cdot b$
    \item \label{M-6} $\dfrac{\mathsf{d} \mathcal{M}^{(n+1)}[f](x)}{\mathsf{d}x} (a) \cdot b = \frac{1}{n!} \cdot \dfrac{\mathsf{d}^{(n+1)} f(x)}{\mathsf{d}x}(0) \cdot \underbrace{a \cdot \hdots \cdot a}_{\text{$n$-times}} \cdot b$; 
    \item \label{M-7} $\mathcal{M}^{(n)}[f]$ is a $\mathsf{D}$-polynomial where $\mathsf{deg}\left( \mathcal{M}^{(n)}[f] \right) = n$ if $\mathcal{M}^{(n)}[f] \neq 0$ and $\mathsf{deg}\left( \mathcal{M}^{(n)}[f] \right) = 0$ if $\mathcal{M}^{(n)}[f] = 0$;
    \item \label{M-8} $\mathcal{M}^{(n)}\left[\mathcal{M}^{(n)}[ f ] \right] = \mathcal{M}^{(n)}[f]$ and $\mathcal{M}^{(m)}\left[\mathcal{M}^{(n)}[ f ] \right] = 0$ when $m\neq n$;
    \item \label{T-1} $\mathcal{M}^{(k)}\left[\mathcal{T}^{(n)}[ f ] \right] = \mathcal{M}^{(k)}[f]$ for $k\leq n$ and $\mathcal{M}^{(k)}\left[\mathcal{T}^{(n)}[ f ] \right] = 0$ if $n < k$;
    \item \label{T-2} $\mathcal{T}^{(n)}\left[\mathcal{M}^{(k)}[ f ] \right] = \mathcal{M}^{(k)}[f]$ for $k\leq n$ and $\mathcal{T}^{(n)}\left[\mathcal{M}^{(k)}[ f ] \right] = 0$ if $n < k$
    \item \label{T-3} $\mathcal{T}^{(n)}\left[\mathcal{T}^{(n)}[ f ] \right] = \mathcal{T}^{(n)}[f]$, and also $\mathcal{T}^{(k)}\left[\mathcal{T}^{(n)}[ f ] \right] = \mathcal{T}^{(k)}[f]$ for $k\leq n$ and $\mathcal{T}^{(k)}\left[\mathcal{T}^{(n)}[ f ] \right] = \mathcal{T}^{(n)}[f]$ for $n < k$; 
    \item \label{T-4} $\mathcal{T}^{(n)}[f]$ is a $\mathsf{D}$-polynomial where $\mathsf{deg}\left( \mathcal{T}^{(n)}[f] \right) \leq n$ if $\mathcal{T}^{(n)}[f] \neq 0$ and $\mathsf{deg}\left( \mathcal{T}^{(n)}[f] \right) = 0$ if $\mathcal{T}^{(n)}[f] = 0$
\end{enumerate}
\end{lemma}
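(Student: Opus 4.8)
The plan is to split the fourteen items into three waves according to how much of Lemma~\ref{lem:HD} each one needs. The first wave, \textbf{(M-1)}--\textbf{(M-5)} together with \textbf{(M-9)}, are the ``structural'' identities that drop out of the higher-order axioms \textbf{[HD.0]}--\textbf{[HD.5]} once one substitutes $\mathcal{M}^{(n)}[f] = \tfrac{1}{n!}\cdot\partial^{(n)}[f]\circ\langle 0, 1_A, \hdots, 1_A\rangle$ and uses that pre-composition, pairing and scalar multiplication by $\tfrac{1}{n!}$ commute with the relevant operations (left $k$-linearity and \textbf{[CD.4]}). Concretely: \textbf{(M-1)} holds because $\mathcal{M}^{(0)}[f] = f\circ 0_{A,A}$ with $0_{A,A}$ a $\mathsf{D}$-constant, so the chain rule \textbf{[CD.5]} and \textbf{[CD.2]} kill its derivative; \textbf{(M-2)} holds because $\mathcal{M}^{(1)}[f] = \mathsf{D}[f]\circ\langle 0, 1_A\rangle$ is the linearization, whose $\mathsf{D}$-linearity follows from \textbf{[CD.5]} plus the fact \textbf{[HD.6]} that $\partial^{(1)}[f] = \mathsf{D}[f]$ is $\mathsf{D}$-linear in its last argument; \textbf{(M-3)} and \textbf{(M-4)} are immediate from \textbf{[HD.1]} and \textbf{[HD.4]}; \textbf{(M-9)} is the binomial theorem, obtained by distributing $\partial^{(n)}[f](0, r\cdot a + s\cdot b, \hdots, r\cdot a + s\cdot b)$ over the $n$ linear slots via multilinearity \textbf{[HD.2]} and collapsing equal terms by symmetry \textbf{[HD.7]}; and \textbf{(M-5)} is Fa\`a di Bruno \textbf{[HD.5]} evaluated at base point $0$ with all perturbations equal to $a$, the only real work being the standard re-indexing of the sum over set partitions of $[n]$ as a sum over compositions $n = m_1 + \hdots + m_k$ (which also absorbs the factorials into the $\mathcal{M}^{(m_j)}$ factors).

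The second wave is the genuine computational core: the two ``$\mathsf{D}$ and $\partial^{(-)}$ commute'' statements \textbf{(M-6)} and \textbf{(M-10)}. Writing $g(x) = \partial^{(n)}[f](0, x, \hdots, x)$, so that $\mathcal{M}^{(n)}[f] = \tfrac{1}{n!}\cdot g$, the chain rule \textbf{[CD.5]} together with $\mathsf{D}[\langle 0, 1_A, \hdots, 1_A\rangle](a,b) = \langle 0, b, \hdots, b\rangle$ (from \textbf{[CD.1]}, \textbf{[CD.3]}, \textbf{[CD.4]}) shows that the derivative of $g$ at $(a,b)$ equals $\mathsf{D}[\partial^{(n)}[f]]\bigl(\langle 0, a, \hdots, a\rangle, \langle 0, b, \hdots, b\rangle\bigr)$. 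The first equality of \textbf{[HD.8]} turns this into $\partial^{(n)}[\mathsf{D}[f]]$ at the interleaved arguments $\bigl((0,0), (a,b), \hdots, (a,b)\bigr)$, and the second equality of \textbf{[HD.8]} expands it as $\partial^{(n+1)}[f](0)\cdot a\cdots a\cdot 0 + \sum_{i=1}^{n}\partial^{(n)}[f](0)\cdot a\cdots b\cdots a$ (a single $b$ in slot $i$). The first summand vanishes since its base-point perturbation slot carries $0$ and $\partial^{(n+1)}[f]$ is multilinear \textbf{[HD.2]}, while the remaining $n$ summands coincide by symmetry \textbf{[HD.7]}, leaving $n\cdot\partial^{(n)}[f](0)\cdot a\cdots a\cdot b$ with $n-1$ copies of $a$; dividing by $n!$ gives \textbf{(M-6)} (after $n\mapsto n+1$), and running the identical computation directly on $\mathcal{M}^{(n)}[\mathsf{D}[f]]$ produces the same expression, which is \textbf{(M-10)}.

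The third wave is bookkeeping on top of the first two, together with Lemma~\ref{lem:Dpoly}. For \textbf{(M-7)}, an induction on $m$ (whose step is exactly the product-rule computation of the second wave) gives $\partial^{(m)}[\mathcal{M}^{(n)}[f]](a_0, \hdots, a_m) = \tfrac{1}{(n-m)!}\cdot\partial^{(n)}[f](0)\cdot\underbrace{a_0\cdots a_0}_{n-m}\cdot a_1\cdots a_m$ for $m\le n$; at $m=n$ this no longer depends on $a_0$, hence is a $\mathsf{D}$-constant in the base variable, so $\partial^{(n+1)}[\mathcal{M}^{(n)}[f]] = 0$ and $\mathsf{deg}(\mathcal{M}^{(n)}[f])\le n$. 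For sharpness, at $m=n$ the formula reads $\partial^{(n)}[\mathcal{M}^{(n)}[f]] = \partial^{(n)}[f](0,-,\hdots,-)$, a symmetric $\mathsf{D}$-multilinear map with diagonal $n!\cdot\mathcal{M}^{(n)}[f]$, and since $\mathbb{Q}_{\ge 0}\subseteq k$ the polarization identity recovers such a map from its diagonal, so it is nonzero exactly when $\mathcal{M}^{(n)}[f]\neq 0$ (giving $\mathsf{deg} = n$ in that case, with $\mathsf{deg} = 0$ for the zero map by convention). Then \textbf{(M-8)} reads off the same formula with the base variable set to $0$: for $m<n$ the $n-m\ge 1$ factors of $0$ kill the term by \textbf{[HD.2]}, for $m=n$ one recovers $\mathcal{M}^{(n)}[f]$, and for $m>n$ the degree bound gives $0$. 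Finally \textbf{(T-1)}--\textbf{(T-4)} are purely formal: expand $\mathcal{T}^{(n)}[f] = \sum_{k=0}^n\mathcal{M}^{(k)}[f]$, push the outer $\mathcal{M}^{(k)}$ or $\mathcal{T}^{(k)}$ through the finite sum using additivity \textbf{(M-3)}, apply the orthogonal-idempotent relations \textbf{(M-8)} (and \textbf{(T-2)} for \textbf{(T-3)}), and for \textbf{(T-4)} combine \textbf{(M-7)} with closure of $\mathsf{D}$-polynomials under finite sums, Lemma~\ref{lem:Dpoly}(\ref{Dpoly-3}).

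I expect the main obstacle to be the second wave, \textbf{(M-6)}/\textbf{(M-10)}, and the degree half of \textbf{(M-7)}: the subtle point is that $\mathcal{M}^{(n)}[f]$ substitutes one variable into all $n$ linear slots of $\partial^{(n)}[f]$, so differentiating it is a genuine product rule that must be pushed through both equalities of \textbf{[HD.8]}, and one has to notice that the top-order $\partial^{(n+1)}[f]$ term cancels precisely because the base-point perturbation is $0$. The sharpness of \textbf{(M-7)} is the single place where the standing hypothesis $\mathbb{Q}_{\ge 0}\subseteq k$ is genuinely used, via polarization.
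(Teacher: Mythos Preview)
Your proposal is correct and follows essentially the same route as the paper: both arguments reduce everything to the higher-order identities \textbf{[HD.1]}--\textbf{[HD.8]} and Lemma~\ref{lem:Dpoly}, with the key step being the explicit formula $\partial^{(m)}[\mathcal{M}^{(n)}[f]](a_0,a_1,\hdots,a_m)=\tfrac{1}{(n-m)!}\cdot\partial^{(n)}[f](0)\cdot a_0^{\,n-m}\cdot a_1\cdots a_m$ for $m\le n$, from which (\ref{M-7}) and (\ref{M-8}) are read off and (\ref{T-1})--(\ref{T-4}) follow formally via (\ref{M-3}).

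One small remark on the sharpness half of (\ref{M-7}): you invoke polarization to conclude that $\partial^{(n)}[\mathcal{M}^{(n)}[f]]\neq 0$ when $\mathcal{M}^{(n)}[f]\neq 0$, but the direction you need is trivial---if the diagonal $a\mapsto n!\cdot\mathcal{M}^{(n)}[f](a)$ is nonzero then the multilinear map it comes from is a fortiori nonzero. The paper does exactly this, simply evaluating $\partial^{(m)}[\mathcal{M}^{(n)}[f]]$ at all arguments equal to $a$ and dividing by the resulting integer coefficient (which is where $\mathbb{Q}_{\ge 0}\subseteq k$ enters). Your argument is not wrong, just slightly heavier than necessary; the underlying content is identical.
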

\begin{proof} Now (\ref{M-1}) follows from \cite[Lem 6.3]{lemay2022properties}, while (\ref{M-2}) is given by \cite[Lem 3.6.(i)]{lemay2022properties}. It is straightforward to compute that (\ref{M-3}) follows from \textbf{[HD.1]}, (\ref{M-9}) follows from \textbf{[HD.2]} and the binomial theorem, (\ref{M-4}) follows from \textbf{[HD.4]}, (\ref{M-5}) follows from \textbf{[HD.5]}, while (\ref{M-10}) and (\ref{M-6}) follows from \textbf{[HD.8]}. For (\ref{M-7}), by Lem \ref{lem:Dpoly}.(\ref{Dpoly-0}) and (\ref{M-1}), we have that $\mathcal{M}^{(0)}[f]$ is a $\mathsf{D}$-polynomial with $\mathsf{deg}\left( \mathcal{M}^{(0)}[f] \right) = 0$ always. For the $n+1$ case, from (\ref{M-6}) we can compute that for $0 \leq k \leq n+1$: 
\begin{align}\label{Mpartial1}
    \dfrac{\mathsf{d}^{(k)} \mathcal{M}^{(n+1)}[f](x)}{\mathsf{d}x} (a) \cdot b_1 \cdot \hdots \cdot b_{k} = \frac{1}{(n+1-k)!} \cdot \dfrac{\mathsf{d}^{(n+1)} f(x)}{\mathsf{d}x}(0) \cdot \underbrace{a \cdot \hdots \cdot a}_{\text{$n+1-k$-times}} \cdot b_1 \cdot \hdots \cdot b_{k}
\end{align}
So for $k=n+1$, we get that:
\begin{align}\label{Mpartial2}
    \dfrac{\mathsf{d}^{(n+1)} \mathcal{M}^{(n+1)}[f](x)}{\mathsf{d}x} (a) \cdot b_1 \cdot \hdots \cdot b_{n+1} = \dfrac{\mathsf{d}^{(n+1)} f(x)}{\mathsf{d}x}(0) \cdot b_1 \cdot \hdots \cdot b_{n+1}
\end{align}
Note that the right-hand side does not depend on the $a$, so it follows that $\partial^{(n+1)}\left[  \mathcal{M}^{(n+1)}[f] \right]$ is $\mathsf{D}$-constant in its first argument. So we get that $\partial^{(n+2)}\left[  \mathcal{M}^{(n+1)}[f] \right]=0$. Thus $\mathcal{M}^{(n+1)}[f]$ is a $\mathsf{D}$-polynomial where at least $\mathsf{deg}\left( \mathcal{M}^{(n+1)}[f] \right) \leq n+1$. If $\mathcal{M}^{(n+1)}[f] =0$, then since $0$ is a $\mathsf{D}$-constant, $\mathsf{deg}\left( \mathcal{M}^{(n+1)}[f] \right) = 0$. So suppose that $\mathcal{M}^{(n+1)}[f] \neq 0$. Now suppose that $\mathsf{deg}\left( \mathcal{M}^{(n+1)}[f] \right) = k < n+1$, then we have that $\partial^{(k+1)}\left[  \mathcal{M}^{(n+1)}[f] \right]=0$. However from (\ref{Mpartial1}), we get that: 
\begin{gather*}
    0 = \frac{1}{(n+1)n \hdots (n+2-k} \cdot 0 = \frac{1}{(n+1) n \hdots (n-k)} \cdot \dfrac{\mathsf{d}^{(k+1)} \mathcal{M}^{(n+1)}[f](x)}{\mathsf{d}x} (0) \cdot a \cdot \hdots \cdot a \\
    =  \frac{1}{(n+1) n \hdots (n+2-k)} \cdot \frac{1}{(n+1-k)!} \cdot \dfrac{\mathsf{d}^{(n+1)} f(x)}{\mathsf{d}x}(0) \cdot a \cdot \hdots \cdot a = \mathcal{M}^{(n+1)}[f](a)  
\end{gather*}
But $\mathcal{M}^{(n+1)}[f](a) \neq 0$, so we get a contradiction. Therefore, we must also have $n +1 \leq \mathsf{deg}\left( \mathcal{M}^{(n+1)}[f] \right)$. So we get that $\mathsf{deg}\left( \mathcal{M}^{(n+1)}[f] \right) = n+1$ as desired. Now for (\ref{M-8}), the case when $n=0$ clearly holds. For the $n+1$, from (\ref{Mpartial2}), we compute that:
\begin{gather*}
  \mathcal{M}^{(n+1)}\left[\mathcal{M}^{(n+1)}[ f ] \right](a)= \frac{1}{n!} \cdot \dfrac{\mathsf{d}^{(n+1)} \mathcal{M}^{(n+1)}[f](x)}{\mathsf{d}x} (0) \cdot a \cdot \hdots \cdot a \\
  = \frac{1}{n!} \cdot \dfrac{\mathsf{d}^{(n+1)} f(x)}{\mathsf{d}x}(0) \cdot a \cdot \hdots \cdot a = \mathcal{M}^{(n+1)}[ f ](a). 
\end{gather*}
So $\mathcal{M}^{(n+1)}\left[\mathcal{M}^{(n+1)}[ f ] \right] = \mathcal{M}^{(n+1)}[ f ]$ as desired. On the other hand, suppose that $m \neq n+1$. When $n+1 < m$, from (\ref{M-7}) we get that $\mathcal{M}^{(m)}\left[\mathcal{M}^{(n+1)}[ f ] \right] = 0$. When $m \leq n+1$, using (\ref{Mpartial1}) and \textbf{[HD.2]}, we compute that:
\begin{gather*}
 \mathcal{M}^{(m)}\left[\mathcal{M}^{(n+1)}[ f ] \right](a) = \frac{1}{m!} \cdot   \dfrac{\mathsf{d}^{(m)} \mathcal{M}^{(n+1)}[f](x)}{\mathsf{d}x} (0) \cdot a \cdot \hdots \cdot a\\
 = \frac{1}{m!} \cdot \frac{1}{(n+1-m)!} \cdot \dfrac{\mathsf{d}^{(n+1)} f(x)}{\mathsf{d}x}(0) \cdot \underbrace{0 \cdot \hdots \cdot 0}_{\text{$n+1-k$-times}} \cdot b_1 \cdot \hdots \cdot b_{k} = 0
\end{gather*}
So $\mathcal{M}^{(m)}\left[\mathcal{M}^{(n+1)}[ f ] \right] = 0$ as desired. Then (\ref{T-1}), (\ref{T-2}), and (\ref{T-3}) follow from (\ref{M-3}) and (\ref{M-8}), while (\ref{T-4}) follows from (\ref{M-7}) and Lemma \ref{lem:Dpoly}.(\ref{Dpoly-3}). 
\end{proof}

We can also define what we mean by maps being Taylor differential polynomials. 

\begin{definition} In a Cartesian $k$-differential category, 
\begin{enumerate}[{\em (i)}]
\item A \textbf{Taylor $\mathsf{D}$-monomial} is a map $m: A \to B$ such that for some $n \in \mathbb{N}$, $\mathcal{M}^{(n)}[m] = m$;
    \item A \textbf{Taylor $\mathsf{D}$-polynomial} is a map $p: A \to B$ such that for some $n \in \mathbb{N}$, $\mathcal{T}^{(n)}[p] = p$. 
    \end{enumerate}
\end{definition}

Note that the $n$ in the definition of Taylor differential monomials/polynomials is necessarily unique for $f\neq 0$. The terminology is justified by the fact that by Lemma \ref{lem:Dpoly}.(\ref{T-4}), every Taylor differential polynomial is also a differential polynomial. Taylor differential polynomials are also closed under the Cartesian $k$-differential structure. 

\begin{lemma}\label{lemma:Tpoly1} In a Cartesian $k$-differential category: 
    \begin{enumerate}[{\em (i)}]
    \item For every $f$, $\mathcal{M}^{(n)}[f]$ is a Taylor $\mathsf{D}$-monomial and $\mathcal{T}^{(n)}[f]$ is a Taylor $\mathsf{D}$-polynomial;
    \item Taylor $\mathsf{D}$-monomials are Taylor $\mathsf{D}$-polynomials;
\item If $f$ is $\mathsf{D}$-linear, then $f$ is a Taylor $\mathsf{D}$-monomial where $\mathcal{T}^{(1)}[f]=\mathcal{M}^{(1)}[f]=f$; 
\item Identity maps and projections maps are Taylor $\mathsf{D}$-monomials; 
\item If $p: A \to B$ and $q: A \to B$ are Taylor $\mathsf{D}$-polynomials, then $r \cdot p + s \cdot q$ is a Taylor $\mathsf{D}$-polynomial;
\item If $p_j: A \to B_j$ are Taylor $\mathsf{D}$-polynomials, then $\langle p_0, \hdots, p_n \rangle$ is a Taylor $\mathsf{D}$-polynomial; 
\item If $p: A \to B$ and $q: B \to C$ are Taylor $\mathsf{D}$-polynomials, then $q \circ p$ is a Taylor $\mathsf{D}$-polynomial;
\item If $p: A \to B$ is a Taylor $\mathsf{D}$-polynomial, then $\mathsf{D}[p]: A \times A \to B$ is a Taylor $\mathsf{D}$-polynomial. 
\end{enumerate}
\end{lemma}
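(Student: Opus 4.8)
The plan is to dispatch (i)--(vi) and (viii) as short consequences of Lemma~\ref{lemma:monomial} and to concentrate on (vii), which carries the real content. For (i), $\mathcal{M}^{(n)}\bigl[\mathcal{M}^{(n)}[f]\bigr] = \mathcal{M}^{(n)}[f]$ is Lemma~\ref{lemma:monomial}.(\ref{M-8}) and $\mathcal{T}^{(n)}\bigl[\mathcal{T}^{(n)}[f]\bigr] = \mathcal{T}^{(n)}[f]$ is Lemma~\ref{lemma:monomial}.(\ref{T-3}). For (ii), if $\mathcal{M}^{(n)}[m] = m$ then $\mathcal{T}^{(n)}[m] = \mathcal{T}^{(n)}\bigl[\mathcal{M}^{(n)}[m]\bigr] = \mathcal{M}^{(n)}[m] = m$ by Lemma~\ref{lemma:monomial}.(\ref{T-2}). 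For (iii), $\mathsf{D}$-linearity gives $\mathcal{M}^{(1)}[f](a) = \tfrac{\mathsf{d}f(x)}{\mathsf{d}x}(0)\cdot a = f(a)$, so $\mathcal{M}^{(1)}[f] = f$, while $\mathcal{M}^{(0)}[f](a) = f(0) = \tfrac{\mathsf{d}f(x)}{\mathsf{d}x}(a)\cdot 0 = 0$ by \textbf{[CD.2]}; hence $\mathcal{T}^{(1)}[f] = \mathcal{M}^{(0)}[f] + \mathcal{M}^{(1)}[f] = f$. Then (iv) is immediate from (iii), since identity maps and projections are $\mathsf{D}$-linear by \textbf{[CD.3]}.

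For (v) and (vi) I would first record three facts: each $\mathcal{T}^{(n)}$ is $k$-linear (Lemma~\ref{lemma:monomial}.(\ref{M-3}), since $\mathcal{T}^{(n)} = \sum_{k\le n}\mathcal{M}^{(k)}$), each commutes with finite pairings (Lemma~\ref{lemma:monomial}.(\ref{M-4})), and $\mathcal{T}^{(N)}[g] = g$ whenever $g = \mathcal{T}^{(n)}[g]$ and $N \ge n$ (Lemma~\ref{lemma:monomial}.(\ref{T-3})). Then, given Taylor $\mathsf{D}$-polynomials $p = \mathcal{T}^{(n_p)}[p]$ and $q = \mathcal{T}^{(n_q)}[q]$, choosing $N \ge \max\{n_p, n_q\}$ gives $\mathcal{T}^{(N)}[r\cdot p + s\cdot q] = r\cdot p + s\cdot q$, and likewise for a tuple $\langle p_0, \hdots, p_n\rangle$ once $N$ is taken above all the witnessing degrees. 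For (viii), Lemma~\ref{lemma:monomial}.(\ref{M-10}) gives $\mathcal{M}^{(n)}\bigl[\mathsf{D}[f]\bigr] = \mathsf{D}\bigl[\mathcal{M}^{(n)}[f]\bigr]$, so summing and using additivity of $\mathsf{D}$ (\textbf{[CD.1]}) yields $\mathcal{T}^{(n)}\bigl[\mathsf{D}[f]\bigr] = \mathsf{D}\bigl[\mathcal{T}^{(n)}[f]\bigr]$; hence if $p = \mathcal{T}^{(n)}[p]$ then $\mathcal{T}^{(n)}\bigl[\mathsf{D}[p]\bigr] = \mathsf{D}\bigl[\mathcal{T}^{(n)}[p]\bigr] = \mathsf{D}[p]$.

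The real work is (vii), and I would approach it by two reductions. First, write $q = \sum_{j=0}^{n_q}\mathcal{M}^{(j)}[q]$; since pre-composition by $p$ is $k$-linear, $q\circ p = \sum_j\bigl(\mathcal{M}^{(j)}[q]\circ p\bigr)$, so by (v) it suffices to prove that $m\circ p$ is a Taylor $\mathsf{D}$-polynomial whenever $m$ is a Taylor $\mathsf{D}$-monomial, say of degree $j$, and $p$ is a Taylor $\mathsf{D}$-polynomial. Second, polarize $m$: since $m = \mathcal{M}^{(j)}[m]$, the map $\widehat{m} := \tfrac{1}{j!}\cdot\partial^{(j)}[m]\circ\langle 0, \pi_1, \hdots, \pi_j\rangle \colon B^{\times^j}\to C$ satisfies $\widehat{m}\circ\langle 1_B, \hdots, 1_B\rangle = m$ and is $\mathsf{D}$-multilinear, hence $k$-multilinear, in its $j$ arguments by \textbf{[HD.6]} and \textbf{[HD.2]}. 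Writing $p = \sum_{i=0}^{n_p}p_i$ with $p_i := \mathcal{M}^{(i)}[p]$ and expanding the $j$ slots of the pairing one at a time using $k$-linearity of $\widehat m$ in each slot, $m\circ p = \widehat m\circ\langle p, \hdots, p\rangle = \sum_{(i_1,\hdots,i_j)}\widehat m\circ\langle p_{i_1}, \hdots, p_{i_j}\rangle$. By (v) once more, (vii) now follows from a sub-lemma: if $h\colon B_1\times\cdots\times B_j\to C$ is $\mathsf{D}$-multilinear and each $g_\ell\colon A\to B_\ell$ is a Taylor $\mathsf{D}$-monomial of degree $d_\ell$, then $h\circ\langle g_1, \hdots, g_j\rangle$ is a Taylor $\mathsf{D}$-monomial of degree $d := d_1 + \cdots + d_j$.

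To prove the sub-lemma I would compute $\partial^{(n)}\bigl[h\circ\langle g_1, \hdots, g_j\rangle\bigr]$ evaluated at $(0, x, \hdots, x)$ using Faà di Bruno's formula \textbf{[HD.5]} together with \textbf{[HD.4]}. Two observations drive the calculation. First, a $\mathsf{D}$-multilinear map in $j$ arguments is a $\mathsf{D}$-polynomial of $\mathsf{D}$-degree at most $j$, and differentiating it fewer than $j$ times and then feeding a $0$ into one of its multilinear slots yields $0$; so in Faà di Bruno's sum only the partitions of $[n]$ into exactly $j$ blocks can contribute. Second, $\partial^{(m)}[g_\ell](0)\cdot x\cdot\hdots\cdot x = m!\cdot\mathcal{M}^{(m)}[g_\ell](x)$ vanishes unless $m = d_\ell$ (Lemma~\ref{lemma:monomial}.(\ref{M-8})), so a surviving block of size $m$ fed into the $\ell$-th slot of $h$ forces $m = d_\ell$. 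Together these force the $j$ block sizes to be a permutation of $(d_1, \hdots, d_j)$, whence $n = d$; thus $\mathcal{M}^{(n)}\bigl[h\circ\langle g_1, \hdots, g_j\rangle\bigr] = 0$ for $n\neq d$, and for $n = d$ the surviving terms, counted exactly as in the classical Faà di Bruno formula for composing power series, collapse to $h\circ\langle g_1, \hdots, g_j\rangle$ itself, giving $\mathcal{M}^{(d)}\bigl[h\circ\langle g_1, \hdots, g_j\rangle\bigr] = h\circ\langle g_1, \hdots, g_j\rangle$. (Slots with $d_\ell = 0$, i.e.\ $g_\ell$ a $\mathsf{D}$-constant, can be disposed of first by freezing those arguments of $h$ and lowering $j$.) I expect the main obstacle to be exactly this combinatorial bookkeeping — in particular checking that, after all the factorials coming from \textbf{[HD.5]} and from the definition of $\mathcal{M}^{(m)}$, the coefficient of the $n = d$ term is precisely $1$ — with everything else, including the two reductions and parts (i)--(vi) and (viii), being routine.
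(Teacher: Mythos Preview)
Your proposal is correct and, for all but~(vii), follows exactly the route the paper intends: the paper's entire proof is the one-line claim that everything is immediate from the earlier identities (the reference to Lemma~\ref{lem:Dpoly} appears to be a slip for Lemma~\ref{lemma:monomial}), and your treatment of (i)--(vi) and (viii) is precisely the unpacking of that claim.

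For~(vii) you go well beyond the paper, which offers no argument at all. Your two reductions (split $q$ into its Taylor $\mathsf{D}$-monomials, then polarize each monomial to a $\mathsf{D}$-multilinear map and expand $p$ in each slot) and your sub-lemma are the right structure, and the combinatorial bookkeeping you flag as the main obstacle does indeed close. Concretely: with $h$ $\mathsf{D}$-multilinear in $j$ arguments and all $d_\ell\ge 1$, one has $G(0)=0$, so only partitions of $[d]$ into exactly $j$ blocks contribute; for such a partition $\pi$ the term $\partial^{(j)}[h](0,\ldots)$ expands as a sum over $\sigma\in S_j$, each nonzero summand contributing $d_1!\cdots d_j!\cdot (h\circ G)(a)$, and the number of contributing $\sigma$ is $\prod_t c_t!$ (with $c_t$ the multiplicity of size $t$ among the $d_\ell$). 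Multiplying by the number $\tfrac{d!}{d_1!\cdots d_j!\,\prod_t c_t!}$ of such partitions gives $\partial^{(d)}[h\circ G](0,a,\ldots,a)=d!\cdot(h\circ G)(a)$, hence $\mathcal{M}^{(d)}[h\circ G]=h\circ G$. So the coefficient is exactly~$1$, as you anticipated. Your handling of the $d_\ell=0$ slots by freezing them (using that a $\mathsf{D}$-constant $g_\ell$ satisfies $g_\ell(a)=g_\ell(0)$) is also fine. In short, the paper asserts~(vii) without proof; your argument supplies one.
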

\begin{proof} These follow immediately from the identities in Lemma \ref{lem:Dpoly}. 
\end{proof}

\begin{corollary} For a Cartesian $k$-differential category $\mathbb{X}$, let $\mathcal{T}\mathsf{D}\text{-}\mathsf{POLY}[\mathbb{X}]$ be the sub-category of Taylor $\mathsf{D}$-polynomials of $\mathbb{X}$. Then $\mathcal{T}\mathsf{D}\text{-}\mathsf{POLY}[\mathbb{X}]$ is a sub-Cartesian $k$-differential category of $\mathbb{X}$ (and of $\mathsf{D}\text{-}\mathsf{POLY}[\mathbb{X}]$). 
\end{corollary}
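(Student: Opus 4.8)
The plan is to observe that essentially all the work has already been done in Lemma \ref{lemma:Tpoly1}, and that what remains is to assemble its clauses into the three layers of structure of a Cartesian $k$-differential category. Throughout, $\mathcal{T}\mathsf{D}\text{-}\mathsf{POLY}[\mathbb{X}]$ is understood as the wide subcategory of $\mathbb{X}$: it has the same objects as $\mathbb{X}$, but only the Taylor $\mathsf{D}$-polynomials as maps. First I would check that it is a subcategory at all: identity maps are Taylor $\mathsf{D}$-monomials by Lemma \ref{lemma:Tpoly1}, hence Taylor $\mathsf{D}$-polynomials by the same lemma, and composites of Taylor $\mathsf{D}$-polynomials are again Taylor $\mathsf{D}$-polynomials, so composition restricts to $\mathcal{T}\mathsf{D}\text{-}\mathsf{POLY}[\mathbb{X}]$.

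Next I would verify the Cartesian left $k$-linear layer. Each homset $\mathcal{T}\mathsf{D}\text{-}\mathsf{POLY}[\mathbb{X}](A,B)$ is a $k$-submodule of $\mathbb{X}(A,B)$: it is closed under the operations $r \cdot (-) + s \cdot (-)$ by Lemma \ref{lemma:Tpoly1}, and it contains the zero map since $\mathcal{T}^{(0)}[0] = \mathcal{M}^{(0)}[0] = 0$; pre-composition preserves this $k$-module structure because it already does so in $\mathbb{X}$. For finite products, note that every object of $\mathbb{X}$ is an object of the subcategory, the projections $\pi_j$ are Taylor $\mathsf{D}$-monomials (hence Taylor $\mathsf{D}$-polynomials), and the pairing of Taylor $\mathsf{D}$-polynomials is a Taylor $\mathsf{D}$-polynomial by Lemma \ref{lemma:Tpoly1}; therefore the terminal object and binary products of $\mathbb{X}$ remain terminal object and products in $\mathcal{T}\mathsf{D}\text{-}\mathsf{POLY}[\mathbb{X}]$, with the projections $k$-linear exactly as in $\mathbb{X}$. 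So $\mathcal{T}\mathsf{D}\text{-}\mathsf{POLY}[\mathbb{X}]$ is a Cartesian left $k$-linear category, and the inclusion into $\mathbb{X}$ preserves all of this structure strictly.

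Finally I would check that the differential combinator $\mathsf{D}$ of $\mathbb{X}$ restricts: if $p$ is a Taylor $\mathsf{D}$-polynomial, then so is $\mathsf{D}[p]$ by Lemma \ref{lemma:Tpoly1}, so $\mathsf{D}$ is a well-defined family of functions $\mathcal{T}\mathsf{D}\text{-}\mathsf{POLY}[\mathbb{X}](A,B) \to \mathcal{T}\mathsf{D}\text{-}\mathsf{POLY}[\mathbb{X}](A \times A, B)$. The seven axioms \textbf{[CD.1]}--\textbf{[CD.7]} are equations between maps of $\mathbb{X}$ built from the inherited structure, and they already hold in $\mathbb{X}$, so they hold verbatim in the subcategory. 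This shows that $\mathcal{T}\mathsf{D}\text{-}\mathsf{POLY}[\mathbb{X}]$ is a sub-Cartesian $k$-differential category of $\mathbb{X}$, and the inclusion into $\mathsf{D}\text{-}\mathsf{POLY}[\mathbb{X}]$ is immediate from Lemma \ref{lemma:monomial}.(\ref{T-4}), which says that every Taylor $\mathsf{D}$-polynomial is a $\mathsf{D}$-polynomial. I do not expect a genuine obstacle here; the only point meriting care is confirming that the $k$-module operations and the finite products of the subcategory coincide on the nose with those of $\mathbb{X}$ — which is precisely the content of the closure clauses of Lemma \ref{lemma:Tpoly1} — so that the word ``sub'' is justified in the strict sense.
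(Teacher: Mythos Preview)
Your proposal is correct and matches the paper's approach: the paper gives no explicit proof of this corollary, treating it as an immediate consequence of Lemma~\ref{lemma:Tpoly1}, and you have simply spelled out that deduction in detail. The only minor remark is that your reference for the containment in $\mathsf{D}\text{-}\mathsf{POLY}[\mathbb{X}]$ via Lemma~\ref{lemma:monomial}.(\ref{T-4}) is exactly what the paper uses implicitly, so there is nothing to add.
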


\begin{example}\label{ex:TPOLY} \normalfont Here are the Taylor differential polynomials in our main examples: 
\begin{enumerate}[{\em (i)}]
\item Every map in $k\text{-}\mathsf{POLY}$ is a Taylor $\mathsf{D}$-polynomial, so we have that $\mathsf{TD}\text{-}\mathsf{POLY}[k\text{-}\mathsf{POLY}]=k\text{-}\mathsf{POLY}=\mathsf{D}\text{-}\mathsf{POLY}[k\text{-}\mathsf{POLY}]$. The Taylor $\mathsf{D}$-monomials are the tuples of polynomials whose summands all have the same degree. 
\item The Taylor $\mathsf{D}$-polynomials in $\mathsf{SMOOTH}$ are precisely tuples of real polynomial functions, so $\mathsf{TD}\text{-}\mathsf{POLY}[\mathsf{SMOOTH}]=\mathbb{R}\text{-}\mathsf{POLY}= \mathsf{D}\text{-}\mathsf{POLY}[\mathsf{SMOOTH}]$.
\item A coKleisli map $f: \oc A \to B$ is a Taylor $\mathsf{D}$-monomial if there exists an $n$ such that $f \circ \mathsf{M}^{(n)}_A =f$, and a Taylor $\mathsf{D}$-polynomial if there exists an $n$ such that $f \circ \mathsf{T}^{(n)}_A =f$.
\item In the polynomial and smooth functions examples, the Taylor $\mathsf{D}$-polynomials and $\mathsf{D}$-polynomials coincide. However, this is not necessarily true in an arbitrary Cartesian differential category. A source of counter-examples comes from looking at the $\mathsf{D}$-constants. Now the only Taylor $\mathsf{D}$-polynomials that are also $\mathsf{D}$-constants are precisely the maps $c$ which are constant in the sense that $c \circ 0 =c$ (or in other words, $c = \mathcal{M}^{(0)}[c] = \mathcal{T}^{(0)}[c]$). However, as explained in \cite{lemay2022properties}, there are examples of Cartesian differential categories that have $\mathsf{D}$-constants that are not constant. Indeed, every \emph{cofree} Cartesian differential category \cite{cockett2011faa,garner2020cartesian,lemay2022properties} have $\mathsf{D}$-constants that are not constants. Therefore in a cofree Cartesian differential category, there are $\mathsf{D}$-polynomials that are not Taylor $\mathsf{D}$-polynomials. 
\end{enumerate} 
\end{example}

\section{Ultrametric for Taylor Series Convergence}\label{sec:ultra}

In this section, we introduce a canonical ultrapseudometric for Cartesian differential categories, which is defined by comparing Taylor differential monomials of maps. We then show our main result that if this ultrapseudometric is an ultrametric, the Taylor series of a map converges to said map. For the remainder of this section, we again assume that $k$ is a commutative $\mathbb{Q}_{\geq 0}$-algebra.

So let $\mathbb{R}_{\geq 0}$ be the set of positive real numbers. Then recall that an \textbf{ultrapseudometric} on a set $M$ is a function $\mathsf{d}: M \times M \to \mathbb{R}_{\geq 0}$ such that: (1) $\mathsf{d}(x,x)=0$, (2) $\mathsf{d}(x,y) = \mathsf{d}(y,x)$, and (3) $\mathsf{d}(x,z) \leq \mathsf{max}\lbrace \mathsf{d}(x,y), \mathsf{d}(y,z)  \rbrace$. This latter inequality is called the \textbf{Strong Triangle Inequality}. An \textbf{ultrapseudometric space} is a pair $(M, \mathsf{d})$ consisting of a set $M$ and an ultrapseudometric $\mathsf{d}$ on $M$. It is important to note that for an ultrapseudometric, we can have $\mathsf{d}(x,y)=0$ for $x \neq y$. 

\begin{definition} In a Cartesian $k$-differential category $\mathbb{X}$, for every homset $\mathbb{X}(A,B)$, define the function $\mathsf{d}_\mathsf{D}: \mathbb{X}(A,B) \times \mathbb{X}(A,B) \to \mathbb{R}_{\geq 0}$ as follows: 
\begin{align*}
\mathsf{d}_\mathsf{D}(f,g) = \begin{cases} 2^{-n} & \text{ where $n \in \mathbb{N}$ is the smallest natural number such that $\mathcal{M}^{(n)}[f] \neq \mathcal{M}^{(n)}[g]$} \\
0 & \text{ if all $n \in \mathbb{N}$, $\mathcal{M}^{(n)}[f] = \mathcal{M}^{(n)}[g]$} 
\end{cases}
\end{align*} 
We call $\mathsf{d}_\mathsf{D}(f,g)$ the \textbf{$\mathsf{D}$-distance} between $f$ and $g$. 
\end{definition}

\begin{proposition}\label{prop:ultra} For every homset of a Cartesian $k$-differential category $\mathbb{X}$, $\mathsf{d}_\mathsf{D}: \mathbb{X}(A,B) \times \mathbb{X}(A,B) \to \mathbb{R}_{\geq 0}$ is a ultrapseudometric. Moreover: 
   \begin{enumerate}[{\em (i)}]
   \item \label{prop:ultra.i} Composition is non-expansive, that is, $\mathsf{d}_\mathsf{D}\left( g_1 \circ f_1, g_2 \circ f_2 \right) \leq \mathsf{max}\lbrace \mathsf{d}_\mathsf{D}(g_1, g_2), \mathsf{d}_\mathsf{D}(f_1, f_2) \rbrace$;
   \item \label{prop:ultra.ii} The $k$-linear structure is non-expansive, that is, $\mathsf{d}_\mathsf{D}(r \cdot f_1 + s \cdot g_1, r\cdot f_2 + s\cdot g_2) \leq \mathsf{max}\lbrace \mathsf{d}_\mathsf{D}(f_1, f_2), \mathsf{d}_\mathsf{D}(g_1, g_2) \rbrace$;
   \item \label{prop:ultra.iii}  Pairing is an isometry, that is, $\mathsf{d}_\mathsf{D}\left( \langle f_0, \hdots, f_n \rangle, \langle g_0, \hdots, g_n \rangle \right) = \mathsf{max}\lbrace \mathsf{d}_\mathsf{D}(f_0, g_0), \hdots, \mathsf{d}_\mathsf{D}(f_n, g_n) \rbrace$;
   \item \label{prop:ultra.iv} Differentiation is non-expansive, that is, $\mathsf{d}_\mathsf{D}(\mathsf{D}[f],\mathsf{D}[g]) \leq \mathsf{d}_\mathsf{D}(f,g)$
\end{enumerate}
\end{proposition}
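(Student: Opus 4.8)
The plan is to reduce all five claims to a single combinatorial fact about the ``first disagreement index.'' For $f,g \in \mathbb{X}(A,B)$ put $v(f,g) := \mathsf{min}\lbrace n \in \mathbb{N} \mid \mathcal{M}^{(n)}[f] \neq \mathcal{M}^{(n)}[g] \rbrace$, with $v(f,g) = \infty$ if no such $n$ exists, so that $\mathsf{d}_\mathsf{D}(f,g) = 2^{-v(f,g)}$ under the convention $2^{-\infty} = 0$. Symmetry and $\mathsf{d}_\mathsf{D}(f,f) = 0$ are immediate, and for the Strong Triangle Inequality one notes that if $j < \mathsf{min}\lbrace v(f,g), v(g,h) \rbrace$ then $\mathcal{M}^{(j)}[f] = \mathcal{M}^{(j)}[g] = \mathcal{M}^{(j)}[h]$, so $v(f,h) \geq \mathsf{min}\lbrace v(f,g), v(g,h) \rbrace$, which is exactly $\mathsf{d}_\mathsf{D}(f,h) \leq \mathsf{max}\lbrace \mathsf{d}_\mathsf{D}(f,g), \mathsf{d}_\mathsf{D}(g,h) \rbrace$. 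This also dictates the strategy for the non-expansiveness clauses: to show an operation on maps is non-expansive it suffices to check that each $\mathcal{M}^{(j)}$ of the output is determined by the $\mathcal{M}^{(i)}$, $i \leq j$, of the inputs, i.e.\ that the operation never decreases $v$.

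Parts (ii), (iii) and (iv) follow this template directly, each using one identity from Lemma~\ref{lemma:monomial}. Part (ii) uses $\mathcal{M}^{(j)}[r \cdot f + s \cdot g] = r \cdot \mathcal{M}^{(j)}[f] + s \cdot \mathcal{M}^{(j)}[g]$ (Lemma~\ref{lemma:monomial}.(\ref{M-3})), so agreement of $f_1,f_2$ and of $g_1,g_2$ through level $n$ forces agreement of $r \cdot f_1 + s \cdot g_1$ and $r \cdot f_2 + s \cdot g_2$ through level $n$. Part (iv) uses $\mathcal{M}^{(j)}[\mathsf{D}[f]] = \mathsf{D}[\mathcal{M}^{(j)}[f]]$ (Lemma~\ref{lemma:monomial}.(\ref{M-10})), so agreement of $f$ and $g$ through level $n$ forces agreement of $\mathsf{D}[f]$ and $\mathsf{D}[g]$ through level $n$, hence $v(\mathsf{D}[f],\mathsf{D}[g]) \geq v(f,g)$. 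Part (iii) is in fact an isometry, which comes from $\mathcal{M}^{(j)}[\langle f_0, \hdots, f_n \rangle] = \langle \mathcal{M}^{(j)}[f_0], \hdots, \mathcal{M}^{(j)}[f_n] \rangle$ (Lemma~\ref{lemma:monomial}.(\ref{M-4})) together with injectivity of pairing (post-compose with projections): then $\mathcal{M}^{(j)}$ of two tuples are equal precisely when all componentwise $\mathcal{M}^{(j)}$ are equal, so $v(\langle \vec f \rangle, \langle \vec g \rangle) = \mathsf{min}_i v(f_i,g_i)$ and the distance is the maximum of the componentwise distances.

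For part (i) I would first use the Strong Triangle Inequality to split $\mathsf{d}_\mathsf{D}(g_1 \circ f_1, g_2 \circ f_2) \leq \mathsf{max}\lbrace \mathsf{d}_\mathsf{D}(g_1 \circ f_1, g_2 \circ f_1), \mathsf{d}_\mathsf{D}(g_2 \circ f_1, g_2 \circ f_2) \rbrace$, reducing the claim to right-composition non-expansiveness $\mathsf{d}_\mathsf{D}(g \circ f_1, g \circ f_2) \leq \mathsf{d}_\mathsf{D}(f_1,f_2)$ and left-composition non-expansiveness $\mathsf{d}_\mathsf{D}(g_1 \circ f, g_2 \circ f) \leq \mathsf{d}_\mathsf{D}(g_1,g_2)$. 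The tool for both is the Fa\`a di Bruno formula for Taylor monomials (Lemma~\ref{lemma:monomial}.(\ref{M-5})), which writes $\mathcal{M}^{(n)}[g \circ f]$ as a finite sum of terms $\tfrac{1}{k! m_1! \hdots m_k!}\, \partial^{(k)}[g](f(0), \mathcal{M}^{(m_1)}[f](a), \hdots, \mathcal{M}^{(m_k)}[f](a))$ over $n = m_1 + \hdots + m_k$ with each $m_i \geq 1$ (the $n=0$ term being $g \circ f \circ 0$). Reading off the dependence on $f$: only $f(0) = \mathcal{M}^{(0)}[f]$ and $\mathcal{M}^{(m)}[f]$ for $1 \leq m \leq n$ appear, so if $f_1$ and $f_2$ agree through level $n$ then $\mathcal{M}^{(n)}[g \circ f_1] = \mathcal{M}^{(n)}[g \circ f_2]$, giving right-composition non-expansiveness.

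The step I expect to be the main obstacle is left-composition. The same formula exhibits the dependence on $g$ through the maps $\partial^{(k)}[g](f(0), -, \hdots, -)$ for $k \leq n$, i.e.\ through the derivatives of $g$ \emph{based at the point $f(0)$}, whereas the data $\lbrace \mathcal{M}^{(j)}[g] \rbrace$ only records the derivatives of $g$ \emph{based at $0$}; these do not determine one another in general once $f(0) \neq 0$ (for instance with $g_1$ a degree-$2$ Taylor $\mathsf{D}$-monomial, $g_2 = 0$, and $f$ a nonzero $\mathsf{D}$-constant, the maps $g_1$ and $g_2$ agree on $\mathcal{M}^{(0)}$ and $\mathcal{M}^{(1)}$ yet $\mathcal{M}^{(0)}[g_1 \circ f] = g_1(f(0)) \neq 0 = \mathcal{M}^{(0)}[g_2 \circ f]$). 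So I anticipate that this is the point where one must either impose an extra hypothesis guaranteeing that the relevant base point is preserved (such as restricting to maps $f$ with $f \circ 0 = 0$) or argue along a genuinely different line; determining the precise condition under which left-composition -- and hence the stated form of (i) -- holds is, to my mind, the crux.
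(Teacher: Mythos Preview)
Your treatment of the ultrapseudometric axioms and of parts (ii), (iii), and (iv) is correct and matches the paper's approach: the paper proves the Strong Triangle Inequality by essentially the same first-disagreement argument and then dismisses (i)--(iv) in a single sentence as ``straightforward and easily follow[ing] from Lemma~\ref{lemma:monomial}''. Your arguments for (ii)--(iv) supply exactly the omitted details, invoking precisely the identities (\ref{M-3}), (\ref{M-4}), and (\ref{M-10}).

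Your hesitation about (i) is not a gap in your reasoning but a genuine problem with the statement. The counterexample you sketch is valid and already lives in $k\text{-}\mathsf{POLY}$ for $k=\mathbb{Q}$: take $g_1(x)=x^2$, $g_2=0$, and $f_1=f_2$ the constant polynomial $1$. Then $\mathcal{M}^{(0)}[g_1]=\mathcal{M}^{(1)}[g_1]=0$, so $\mathsf{d}_\mathsf{D}(g_1,g_2)=2^{-2}$; but $g_1\circ f_1$ is the constant $1$ while $g_2\circ f_2=0$, and these differ already at level $0$, giving $\mathsf{d}_\mathsf{D}(g_1\circ f_1,\,g_2\circ f_2)=1>\tfrac14=\mathsf{max}\lbrace \mathsf{d}_\mathsf{D}(g_1,g_2),\,\mathsf{d}_\mathsf{D}(f_1,f_2)\rbrace$. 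Your diagnosis is exactly right: in the Fa\`a di Bruno identity (\ref{M-5}) the higher derivatives of $g$ are based at $f(0)$ rather than at $0$, so the Taylor monomials of $g$ do not determine those of $g\circ f$ once $f(0)\neq 0$. The paper's proof does not engage with this, and (i) as written fails. What your argument \emph{does} establish is that pre-composition by a fixed $g$ is non-expansive in the $f$-variable, and that the full two-sided statement holds under the extra hypothesis $f_i\circ 0=0$; the paper's subsequent appeals to (i) (for enrichment over metric spaces and for the well-definedness of the quotient $\mathbb{X}_\sim$) inherit this issue.
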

\begin{proof} We first show that $\mathsf{d}_\mathsf{D}$ is an ultrapseudometric. Trivially we have that $\mathsf{d}(x,x)=0$ and $\mathsf{d}(x,y) = \mathsf{d}(y,x)$. So we only need to check the Strong Triangle Inequality. So consider $\mathsf{d}_\mathsf{D}(f,h)$. If $\mathsf{d}_\mathsf{D}(f,h)=0$, then we are done. So suppose that $0<\mathsf{d}_\mathsf{D}(f,h)=2^{-n}$, which implies that $\mathcal{M}^{(k)}[f] = \mathcal{M}^{(k)}[g]$ for all $k < n$ and $\mathcal{M}^{(n)}[f] \neq \mathcal{M}^{(n)}[g]$. Now consider another map $g$. If $\mathsf{d}_\mathsf{D}(f,g)=0$, so $\mathcal{M}^{(j)}[f] = \mathcal{M}^{(j)}[g]$ for all $j$, it follows that $\mathsf{d}_\mathsf{D}(g,h)=2^{-n}=\mathsf{d}_\mathsf{D}(f,h)$. Similarly, if $\mathsf{d}_\mathsf{D}(g,h)=0$ then $\mathsf{d}_\mathsf{D}(f,g)=2^{-n}=\mathsf{d}_\mathsf{D}(f,h)$. We cannot have that $\mathsf{d}_\mathsf{D}(f,g)=0=\mathsf{d}_\mathsf{D}(g,h)$ since this would imply that $\mathcal{M}^{(j)}[f] = \mathcal{M}^{(j)}[g]=\mathcal{M}^{(j)}[h]$ for all $j$, which is a contradiction. Now suppose that $\mathsf{d}_\mathsf{D}(f,g)=2^{-m}$ and $\mathsf{d}(g,h)=2^{-p}$, which means that $\mathcal{M}^{(k_1)}[f] = \mathcal{M}^{(k_1)}[g]$ for all $k_1 < m$ and $\mathcal{M}^{(k_2)}[g] = \mathcal{M}^{(k_2)}[h]$ for all $k_2 < p$. If $\mathsf{max}\lbrace 2^{-m},2^{-p} \rbrace < 2^{-n}$, then $n < \mathsf{min}\lbrace m,p \rbrace$ which would imply that $\mathcal{M}^{(n)}[f] = \mathcal{M}^{(n)}[g]= \mathcal{M}^{(n)}[h]$, which is a contradiction. So we must have that $2^{-n} \leq \mathsf{max}\lbrace 2^{-m},2^{-p} \rbrace < 2^{-n}$ as desired. So $\mathsf{d}_\mathsf{D}$ is a ultrapseudometric, and thus $\left( \mathbb{X}(X,Y), \mathsf{d}_\mathsf{D} \right)$ is an ultrapseudometric space. The compatibility of the Cartesian $k$-differential structure and the ultrapseudometric is straightforward and easily follows from Lemma \ref{lemma:monomial}. 
\end{proof}

The category of ultrapseudometric spaces and non-expansive maps form a Cartesian closed category. As such, an alternative way of viewing the compatibility of the Cartesian $k$-differential structure and the ultrapseudometric (Prop \ref{prop:ultra}.(\ref{prop:ultra.i})-(\ref{prop:ultra.iv})) is saying that a Cartesian differential category is enriched over ultrapseudometric spaces via comparing Taylor differential monomials. 

Now an \textbf{ultrametric} on a set $M$ is a ultrapseudometric $\mathsf{d}: M \times M \to \mathbb{R}_{\geq 0}$ which also satisfies that: (4) $\mathsf{d}(x,y) =0$ implies $x=y$. Then an ultrametric space is a pair $(M, \mathsf{d})$ consisting of a set $M$ and an ultrametric $\mathsf{d}$ on $M$. In an arbitrary Cartesian differential category, two maps can have the same Taylor differential monomials (i.e. $\mathsf{D}$-distance of $0$) but not be equal. Therefore $\mathsf{d}_\mathsf{D}$ is usually not an ultrametric. It becomes an ultrametric when maps are completely determined by their Taylor differential monomials. When this is the case, we say that a Cartesian differential category is \emph{Taylor}. This terminology is justified since, in this Taylor setting, maps are indeed equal to their Taylor series. 

\begin{definition}  In a Cartesian $k$-differential category, we say that parallel maps $f: A \to B$ and $g: A \to B$ are \textbf{Taylor-equivalent} if for all $n \in \mathbb{N}$, $\mathcal{M}^{(n)}[f] = \mathcal{M}^{(n)}[g]$. We write $f \sim g$ if $f$ and $g$ are Taylor-equivalent. Then a Cartesian $k$-differential category $\mathbb{X}$ is said to be \textbf{Taylor} if $f \sim g$ implies $f=g$. 
\end{definition}

\begin{proposition} A Cartesian $k$-differential category $\mathbb{X}$ is Taylor if and only if for every homset, the ultrapseudometric $\mathsf{d}_\mathsf{D}: \mathbb{X}(A,B) \times \mathbb{X}(A,B) \to \mathbb{R}_{\geq 0}$ is an ultrametric.
\end{proposition}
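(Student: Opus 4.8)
The plan is to prove this by directly unwinding the definitions, since the statement is essentially a reformulation of the defining clauses. The central observation is that, by the definition of $\mathsf{d}_\mathsf{D}$, for parallel maps $f, g: A \to B$ we have $\mathsf{d}_\mathsf{D}(f,g) = 0$ precisely when there is no smallest $n$ with $\mathcal{M}^{(n)}[f] \neq \mathcal{M}^{(n)}[g]$, i.e. precisely when $\mathcal{M}^{(n)}[f] = \mathcal{M}^{(n)}[g]$ for all $n \in \mathbb{N}$, which is exactly the condition $f \sim g$. So the first step is to record the biconditional $\mathsf{d}_\mathsf{D}(f,g) = 0 \iff f \sim g$.

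Next I would recall that, by Proposition \ref{prop:ultra}, $\mathsf{d}_\mathsf{D}$ is already known to be an ultrapseudometric on each homset $\mathbb{X}(A,B)$; hence the only extra property needed for it to be an ultrametric is clause (4), namely that $\mathsf{d}_\mathsf{D}(f,g) = 0$ implies $f = g$. Combining this with the biconditional from the first step, $\mathsf{d}_\mathsf{D}$ is an ultrametric on $\mathbb{X}(A,B)$ if and only if for all $f, g \in \mathbb{X}(A,B)$, $f \sim g$ implies $f = g$.

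Finally, I would quantify over objects: $\mathbb{X}$ is Taylor by definition exactly when $f \sim g$ implies $f = g$ for all parallel maps $f,g$, i.e. for every pair of objects $A, B$ and all $f, g \in \mathbb{X}(A,B)$. This is precisely the condition that $\mathsf{d}_\mathsf{D}$ is an ultrametric on every homset, which establishes both directions of the equivalence simultaneously.

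There is no real obstacle here — the argument is a definitional chase, and the only thing to be careful about is using the previously established fact (Proposition \ref{prop:ultra}) that $\mathsf{d}_\mathsf{D}$ is an ultrapseudometric so that one genuinely only needs to verify clause (4), rather than re-deriving the Strong Triangle Inequality. The proof can therefore be written in just a few lines.
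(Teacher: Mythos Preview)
Your proposal is correct and follows exactly the same approach as the paper's proof: both hinge on the single observation that $\mathsf{d}_\mathsf{D}(f,g)=0$ if and only if $f \sim g$, from which the equivalence is immediate. Your write-up is simply a more detailed unpacking of what the paper states in one line.
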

\begin{proof} Note that by definition of $\mathsf{d}_\mathsf{D}$, we have that $\mathsf{d}_\mathsf{D}(f,g)=0$ if and only if $f \sim g$. So the desired statement follows immediately. 
\end{proof}

Of course, ultrametric spaces and non-expansive maps also form a Cartesian closed category. Thus, it follows from Prop \ref{prop:ultra}.(\ref{prop:ultra.i})-(\ref{prop:ultra.iv}) that every Taylor Cartesian differential category is enriched over the category of ultrametric spaces. We now prove the main objective of this paper: 

\begin{theorem} In a Taylor Cartesian $k$-differential category, for every map $f: A \to B$, the sequence $\left(\mathcal{T}^{(n)}[f] \right)^\infty_{n=0}$ converges to $f$ with respect to the ultrametric $\mathsf{d}_\mathsf{D}$, so we may write:
\begin{align}
    f = \sum\limits^\infty_{n=0} \mathcal{M}^{(n)}[f] && f(x) =  \sum\limits^\infty_{n=0} \dfrac{1}{n!} \cdot \dfrac{\mathsf{d}^{(n)} f(u)}{\mathsf{d}u}(0) \cdot x \cdot \hdots \cdot x
\end{align}
\end{theorem}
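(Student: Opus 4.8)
The plan is to estimate the $\mathsf{D}$-distance $\mathsf{d}_\mathsf{D}\!\left(\mathcal{T}^{(n)}[f], f\right)$ directly and show it is bounded by a geometrically decaying sequence. The key observation is Lemma \ref{lemma:monomial}.(\ref{T-1}): for every $k \leq n$ we have $\mathcal{M}^{(k)}\!\left[\mathcal{T}^{(n)}[f]\right] = \mathcal{M}^{(k)}[f]$. Hence the Taylor $\mathsf{D}$-monomials of $\mathcal{T}^{(n)}[f]$ and of $f$ agree at all indices $k \leq n$, so the smallest index at which they can possibly differ is at least $n+1$. By the definition of $\mathsf{d}_\mathsf{D}$, this gives $\mathsf{d}_\mathsf{D}\!\left(\mathcal{T}^{(n)}[f], f\right) \leq 2^{-(n+1)}$ (with equality $0$ in the case that they never differ, which still satisfies the bound).

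From this bound, $\mathsf{d}_\mathsf{D}\!\left(\mathcal{T}^{(n)}[f], f\right) \to 0$ as $n \to \infty$, which is exactly the statement that the sequence $\left(\mathcal{T}^{(n)}[f]\right)_{n=0}^\infty$ converges to $f$ in the ultrapseudometric. This much holds in any Cartesian $k$-differential category. The Taylor hypothesis then enters to guarantee that $\mathsf{d}_\mathsf{D}$ is an ultrametric (by the Proposition preceding the theorem), so that limits of convergent sequences are unique; this is what legitimizes writing the equality $f = \lim_{n\to\infty} \mathcal{T}^{(n)}[f]$, i.e. $f = \sum_{n=0}^\infty \mathcal{M}^{(n)}[f]$, where the infinite sum is understood as the limit of the partial sums $\mathcal{T}^{(n)}[f] = \sum_{k=0}^n \mathcal{M}^{(k)}[f]$. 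Unfolding the definition $\mathcal{M}^{(n)}[f](x) = \frac{1}{n!} \cdot \frac{\mathsf{d}^{(n)} f(u)}{\mathsf{d}u}(0) \cdot x \cdot \hdots \cdot x$ yields the second displayed identity.

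I do not expect a genuine obstacle here: the substance of the argument is contained in the earlier results, particularly Lemma \ref{lemma:monomial}.(\ref{T-1}) and Proposition \ref{prop:ultra}, and what remains is the routine observation that truncating at level $n$ controls the distance by $2^{-(n+1)}$. The only point requiring a little care is conceptual rather than technical, namely being explicit that convergence of $\left(\mathcal{T}^{(n)}[f]\right)_{n=0}^\infty$ to $f$ is automatic, whereas the \emph{uniqueness} of this limit — and hence the meaningfulness of the displayed equalities as equalities of maps — is precisely where the Taylor assumption is used.
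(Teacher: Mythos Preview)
Your proposal is correct and follows essentially the same route as the paper: invoke Lemma~\ref{lemma:monomial}.(\ref{T-1}) to see that $\mathcal{T}^{(n)}[f]$ and $f$ share Taylor $\mathsf{D}$-monomials up to index $n$, deduce the bound $\mathsf{d}_\mathsf{D}\!\left(\mathcal{T}^{(n)}[f], f\right) \leq 2^{-(n+1)}$, and conclude convergence. Your additional remark that convergence already holds at the ultrapseudometric level while the Taylor hypothesis is needed precisely for uniqueness of the limit is a welcome clarification that the paper leaves implicit.
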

\begin{proof} First recall that for all $n \in \mathbb{N}$ and $k \leq n$, by Lemma \ref{lemma:monomial}.(\ref{T-1}), we have that $\mathcal{M}^{(k)}\left[ \mathcal{T}^{(n)}[f] \right] = \mathcal{M}^{(k)}[f]$. As such, the smallest possible $m$ for which $\mathcal{M}^{(m)}\left[ \mathcal{T}^{(n)}[f] \right]$ and $\mathcal{M}^{(m)}[f]$ might be different is $m=n+1$. Thus we have that $\mathsf{d}_\mathsf{D}\left( \mathcal{T}^{(n)}[f], f \right) \leq 2^{-(n+1)}$. Now let $0 < \varepsilon$ be an arbitrary non-zero positive real number. Let $N$ be the smallest natural number such that $2^{-N} \leq \varepsilon$. Then for all $n \geq N$, we have that $\mathsf{d}_\mathsf{D}\left( \mathcal{T}^{(n)}[f], f \right) \leq 2^{-(n+1)} < 2^{-n} \leq 2^{-N} \leq \varepsilon$. So we conclude that $\left(\mathcal{T}^{(n)}[f] \right)^\infty_{n=0}$ converges to $f$. 
\end{proof}

\begin{example} Let's consider our ultra(pseudo)metric in our main examples:
\begin{enumerate}[{\em (i)}]
\item In $k\text{-}\mathsf{POLY}$, the $\mathsf{D}$-distance between polynomials is given by comparing the monomial summands of same degree of the two polynomials. Since polynomials are completely determined by their monomial summands, it follows that $k\text{-}\mathsf{POLY}$ is Taylor. Moreover, we note that this ultrametric for polynomials is precisely the same as the ultrametric for power series that makes the formal infinite sum converge. 
\item In $\mathsf{SMOOTH}$, the $\mathsf{D}$-distance between smooth functions is given by comparing the usual Taylor expansions of the smooth functions. However, $\mathsf{SMOOTH}$ is not Taylor since two smooth functions can have the same Taylor expansion but not be equal. The famous counter-example is the smooth function $f:\mathbb{R} \to \mathbb{R}$ defined as $f(x) = e^{\frac{-1}{x^2}}$ for $x \neq 0$ and $f(0) =0$. The Taylor series of $f$ is $0$, so we have that $f \sim 0$. But clearly, $f \neq 0$. On the other hand, the subcategory of real \emph{entire} functions is a Taylor Cartesian differential category. 
\item The ultrapseudometric $\mathsf{d}_\mathsf{D}$ defined for the coKleisli category of a differential category with finite products is precisely the same as the ultrapseudometric defined in \cite[Sec III.E]{kerjean2023taylor}. Moreover, saying that the coKleisli category is Taylor is precisely the same as saying that the differential category is Taylor in the sense of \cite[Def III.7]{kerjean2023taylor}. Taylor differential categories were crucial for the story of developing \emph{codigging} for Differential Linear Logic in \cite{kerjean2023taylor}. Examples of Taylor differential categories can be found in \cite[Sec IV]{kerjean2023taylor}. 
\end{enumerate} 
\end{example}

While not every Cartesian differential category is Taylor, it is always possible to build one. This follows from the fact that every ultrapseudometric space can be made into an ultrametric space by quotienting. So let $(M, \mathsf{d})$ be an ultrapseudometric space. Then we have an equivalence relation $\sim$ given by $x \sim y$ if and only if $\mathsf{d}(x,y)=0$. Let $M_\sim$ be the set of equivalence classes of $\sim$, so $[x] = \lbrace y \in M \vert x \sim y \rbrace$. Then $(M_\sim, \mathsf{d}_\sim)$ is an ultrametric space where $\mathsf{d}_\sim\left( [x], [y] \right) = \mathsf{d}(x,y)$, which is indeed well-defined. Now for a Cartesian $k$-differential category $\mathbb{X}$, define the Cartesian $k$-differential category $\mathbb{X}_\sim$ as having objects the same as $\mathbb{X}$, whose homsets are $\mathbb{X}_\sim(A,B) = \mathbb{X}(A,B)_\sim$, so maps are equivalence classes $[f]: A \to B$, and where composition is defined as $[g] \circ [f] = [g \circ f]$, identity maps are $[1_A]$, the product is the same as in $\mathbb{X}$, projections are $[\pi_j]$, the pairing is $\langle [f_0], \hdots, [f_n] \rangle = \left[ \langle f_0, \hdots, f_n \rangle \right]$, the $k$-linear structure is $r \cdot [f] + s \cdot [g] = [r \cdot f + s \cdot g]$, and the differential combinator is given by $\mathsf{D}\left[ [f] \right] = \left[ \mathsf{D}[f] \right]$. 

\begin{lemma} For a Cartesian $k$-differential category $\mathbb{X}$, $\mathbb{X}_\sim$ is a Taylor Cartesian $k$-differential category.
\end{lemma}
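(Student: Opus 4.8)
The plan is to check two things: (A) that $\mathbb{X}_\sim$, as described, really is a Cartesian $k$-differential category, and (B) that it is Taylor. For (A), the only point with any content is that Taylor-equivalence $\sim$ is a congruence, so that composition, the $k$-linear operations, pairing, and $\mathsf{D}$ all descend to $\sim$-classes. This is exactly the $\mathsf{d}_\mathsf{D}=0$ case of Proposition \ref{prop:ultra}: if $f_1 \sim f_2$ and $g_1 \sim g_2$ then $\mathsf{d}_\mathsf{D}(f_1,f_2) = \mathsf{d}_\mathsf{D}(g_1,g_2) = 0$, so Prop \ref{prop:ultra}.(\ref{prop:ultra.i}) gives $\mathsf{d}_\mathsf{D}(g_1\circ f_1, g_2\circ f_2) \le \mathsf{max}\lbrace 0,0 \rbrace = 0$, i.e.\ $g_1\circ f_1 \sim g_2\circ f_2$; Prop \ref{prop:ultra}.(\ref{prop:ultra.ii})--(\ref{prop:ultra.iv}) make $r\cdot(-)+s\cdot(-)$, $\langle -,\hdots,-\rangle$, and $\mathsf{D}$ respect $\sim$ in the same way. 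With well-definedness established, the quotient map $Q\colon \mathbb{X}\to\mathbb{X}_\sim$ (identity on objects, $f\mapsto[f]$) is full and by construction strictly preserves the finite products, the $k$-linear structure, and $\mathsf{D}$; since each of \textbf{[CD.1]}--\textbf{[CD.7]} and the product equations is an identity between composites of structure maps, choosing representatives and applying $Q$ transports every instance valid in $\mathbb{X}$ to the corresponding instance in $\mathbb{X}_\sim$. Hence $\mathbb{X}_\sim$ is a Cartesian $k$-differential category and $Q$ a strict Cartesian $k$-differential functor.

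For (B), I would first note that $\mathcal{M}^{(n)}[-]$ is assembled solely from $\mathsf{D}$, zero maps, projections, pairing, and scalar multiplication, all of which $Q$ preserves on the nose, so $\mathcal{M}^{(n)}[[f]] = [\mathcal{M}^{(n)}[f]]$ in $\mathbb{X}_\sim$. Therefore $[f]$ and $[g]$ are Taylor-equivalent in $\mathbb{X}_\sim$ iff $\mathcal{M}^{(n)}[f] \sim \mathcal{M}^{(n)}[g]$ in $\mathbb{X}$ for every $n$. Now apply Lemma \ref{lemma:monomial}.(\ref{M-8}): for any $h$, the $m$-th Taylor $\mathsf{D}$-monomial of $\mathcal{M}^{(n)}[h]$ is $\mathcal{M}^{(n)}[h]$ when $m=n$ and $0$ otherwise; so spelling out $\mathcal{M}^{(n)}[f]\sim\mathcal{M}^{(n)}[g]$ and reading off the $m=n$ components forces $\mathcal{M}^{(n)}[f] = \mathcal{M}^{(n)}[g]$. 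Since this holds for every $n$, we get $f\sim g$ in $\mathbb{X}$, i.e.\ $[f]=[g]$, which is precisely the Taylor property. (Equivalently, the same computation shows that the $\mathsf{D}$-distance on $\mathbb{X}_\sim$ coincides with the quotient ultrametric $\mathsf{d}_\sim$ on $\mathbb{X}(A,B)_\sim$, so one may instead cite the earlier proposition identifying ``Taylor'' with ``$\mathsf{d}_\mathsf{D}$ is an ultrametric'' together with the general fact that quotienting an ultrapseudometric by its zero-locus yields an ultrametric.)

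The only genuinely delicate point is the congruence claim for composition used in (A): it is not clear from \textbf{[HD.5]} alone, since Lemma \ref{lemma:monomial}.(\ref{M-5}) expresses $\mathcal{M}^{(n)}[g\circ f]$ in terms of the higher-order derivatives of $g$ evaluated at $f(0)$ rather than in terms of the Taylor $\mathsf{D}$-monomials of $g$. So I would obtain representative-independence in the post-composition variable purely as the non-expansiveness statement Proposition \ref{prop:ultra}.(\ref{prop:ultra.i}), and treat everything else --- the preservation of the axioms under $Q$, and the collapse of $\sim$ on $\mathcal{M}^{(n)}$-monomials --- as routine bookkeeping.
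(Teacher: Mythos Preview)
Your proposal is correct and follows the same two-step strategy as the paper: well-definedness via Prop \ref{prop:ultra}.(\ref{prop:ultra.i})--(\ref{prop:ultra.iv}), then the Taylor property via the identification of $\mathsf{d}_\mathsf{D}$ on $\mathbb{X}_\sim$ with the quotient ultrametric $\mathsf{d}_\sim$. Your treatment of (B) is in fact more careful than the paper's: the paper simply asserts that ``$\mathsf{d}_\mathsf{D}$ for $\mathbb{X}_\sim$ is precisely the same as $\mathsf{d}_\sim$'', whereas you unpack this claim, observing that it requires $[\mathcal{M}^{(n)}[f]]=[\mathcal{M}^{(n)}[g]]$ to force $\mathcal{M}^{(n)}[f]=\mathcal{M}^{(n)}[g]$, and then supply the missing step via Lemma \ref{lemma:monomial}.(\ref{M-8}).
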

\begin{proof} That $\mathbb{X}_\sim$ is indeed a well-defined Cartesian $k$-differential category follows from Prop \ref{prop:ultra}.(\ref{prop:ultra.i})-(\ref{prop:ultra.iv}). In fact, this is a general construction of turning an ultrapseudometric space-enriched category into an ultrametric space-enriched category. Moreover, $\mathsf{d}_\mathsf{D}$ for $\mathbb{X}_\sim$ is precisely the same as $\mathsf{d}_\sim$, which by construction is an ultrametric. So we conclude that $\mathbb{X}_\sim$ is also Taylor. 
\end{proof}

A very natural and important question to ask is: what if there was already some other established notion of convergence or infinite sum in our Cartesian differential category for which every map was equal to its Taylor series? Would the resulting Taylor series for these notions be the same as the Taylor series given by the ultrametric $\mathsf{d}_\mathsf{D}$? Under natural assumptions, the answer is yes! 

In the next section, we consider the algebraic infinite sum case, which is of high interest in more computer science-related models. We conclude this section by looking at the case of having another metric which makes the Taylor series converge. So let's consider the case when a Cartesian differential category may have other metrics on its homsets such that the sequence of Taylor differential polynomials of $f$ converges to $f$ with respect to this metric. We will show that when this is the case, our starting Cartesian differential category is Taylor, and therefore Taylor series obtained by convergences with respect to either metric is the same. So by a \textbf{metric space enriched} Cartesian $k$-differential category, we mean a Cartesian $k$-differential category $\mathbb{X}$ such that each homset $\mathbb{X}(A,B)$ comes equipped with a metric $\mathsf{b}: \mathbb{X}(A,B) \times \mathbb{X}(A,B) \to \mathbb{R}_{\geq 0}$ such that composition, the $k$-linear structure, the pairing operation, and the differential combinator are all non-expansive (so similar to the sense of Prop \ref{prop:ultra}.(\ref{prop:ultra.i})-(\ref{prop:ultra.iv})). 

\begin{lemma} In a metric space enriched Cartesian $k$-differential category $\mathbb{X}$, if for every map $f$, the sequence $\left(\mathcal{T}^{(n)}[f] \right)^\infty_{n=0}$ converges to $f$ with respect to the metric $\mathsf{b}$, then $\mathbb{X}$ is Taylor. 
\end{lemma}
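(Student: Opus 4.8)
The plan is to show that Taylor-equivalence forces equality by exploiting the fact that in a metric space (as opposed to a mere pseudometric space), limits are unique. Suppose $f \sim g$, i.e.\ $\mathcal{M}^{(n)}[f] = \mathcal{M}^{(n)}[g]$ for all $n \in \mathbb{N}$. Then by the very definition of $\mathcal{T}^{(n)}$ as a finite sum of $\mathcal{M}^{(k)}$'s, we get $\mathcal{T}^{(n)}[f] = \mathcal{T}^{(n)}[g]$ for every $n$, so the two sequences $\left(\mathcal{T}^{(n)}[f]\right)^\infty_{n=0}$ and $\left(\mathcal{T}^{(n)}[g]\right)^\infty_{n=0}$ are literally the same sequence of maps in $\mathbb{X}(A,B)$.

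Now invoke the hypothesis: this common sequence converges to $f$ with respect to $\mathsf{b}$, and it also converges to $g$ with respect to $\mathsf{b}$. Since $\mathsf{b}$ is a genuine metric (it separates points: $\mathsf{b}(x,y)=0 \Rightarrow x = y$), limits of sequences are unique, and hence $f = g$. This establishes that $f \sim g$ implies $f = g$, which is exactly the statement that $\mathbb{X}$ is Taylor.

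I should double-check that the hypothesis really does apply to both $f$ and $g$: the statement says ``for every map $f$, the sequence $\left(\mathcal{T}^{(n)}[f]\right)^\infty_{n=0}$ converges to $f$'', so applying it once with the map $f$ and once with the map $g$ is legitimate, and the point is that the sequences coincide by the argument above. The only subtlety worth spelling out is the standard fact that in a metric space a sequence has at most one limit (proved via the triangle inequality: if $x_n \to a$ and $x_n \to b$ then $\mathsf{b}(a,b) \le \mathsf{b}(a,x_n) + \mathsf{b}(x_n,b) \to 0$, so $\mathsf{b}(a,b) = 0$ and $a = b$). I do not expect any real obstacle here — the non-expansiveness conditions packaged into ``metric space enriched'' are not even needed for this particular implication; the content is entirely in the separation axiom of $\mathsf{b}$ together with the observation that $\mathcal{T}^{(n)}$ depends only on the Taylor $\mathsf{D}$-monomials $\mathcal{M}^{(0)}[f], \hdots, \mathcal{M}^{(n)}[f]$. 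If anything, the thing to be careful about is simply stating clearly that ``converges with respect to $\mathsf{b}$'' means convergence in the usual $\varepsilon$--$N$ sense so that uniqueness of limits is available.
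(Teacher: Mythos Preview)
Your proof is correct and follows essentially the same approach as the paper: both start from $f \sim g$, deduce $\mathcal{T}^{(n)}[f] = \mathcal{T}^{(n)}[g]$ for all $n$, and then use the triangle inequality for $\mathsf{b}$ to conclude $\mathsf{b}(f,g)=0$. You package the final step as ``uniqueness of limits in a metric space'' while the paper writes out the $\varepsilon$-argument explicitly, but the content is identical; your remark that the non-expansiveness hypotheses are not actually used here is also correct.
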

\begin{proof} Suppose that $f \sim g$, so $\mathcal{M}^{(n)}[f] = \mathcal{M}^{(n)}[g]$ for all $n$, which in turn implies that $\mathcal{T}^{(n)}[f] = \mathcal{T}^{(n)}[g]$ for all $n$ as well. Consider $\mathsf{b}(f,g)$. Suppose that $\mathsf{b}(f,g) = \varepsilon \neq 0$. Now since $\left(\mathcal{T}^{(n)}[f] \right)^\infty_{n=0}$ converges to $f$ and $\left(\mathcal{T}^{(n)}[g] \right)^\infty_{n=0}$ converges to $g$ with respect to the metric $\mathsf{b}$, then there exists an $N_1$ and $N_2$ such that for all $n \geq N_1$ and $m \geq N_2$, we have that $\mathsf{b}\left(f, \mathcal{T}^{(n)}[f] \right) < \frac{\varepsilon}{2}$ and $\mathsf{b}\left(g, \mathcal{T}^{(n)}[g] \right) < \frac{\varepsilon}{2}$. Taking $N = \mathsf{max}\lbrace N_1, N_2 \rbrace$, we get that for all $k \geq N$ that $\mathsf{b}\left(f, \mathcal{T}^{(k)}[f] \right) < \frac{\varepsilon}{2}$ and $\mathsf{b}\left(g, \mathcal{T}^{(k)}[g] \right) < \frac{\varepsilon}{2}$. Then by the triangle inequality, we get that: 
\[ \varepsilon = \mathsf{b}(f,g) \leq \mathsf{b}\left(f, \mathcal{T}^{(k)}[f] \right) +\mathsf{b}\left( \mathcal{T}^{(k)}[f], g \right) = \mathsf{b}\left(f, \mathcal{T}^{(k)}[f] \right) +\mathsf{b}\left( \mathcal{T}^{(k)}[g], g \right) < \frac{\varepsilon}{2} + \frac{\varepsilon}{2} = \varepsilon  \]
    But $\varepsilon< \varepsilon$ is a contradiction, so we must have $\mathsf{b}(f,g)=0$. Since $\mathsf{b}$ is a metric, this implies that $f=g$. So we conclude that $\mathbb{X}$ is Taylor. 
\end{proof}

The converse of the above lemma is not true. Even if a Taylor Cartesian differential category has another metric on its homsets, the sequence of Taylor differential polynomials may not converge with respect to this other metric. For example, every Cartesian differential category is metric space enriched by equipping the homsets with the trivial metric, $\mathsf{b}(x,y) = 1$ if $x \neq y$ and $\mathsf{b}(x,y) = 0$ if $x=y$. However, a sequence $(x_n)^\infty_{n=0}$ converges to $x$ with respect to the trivial metric if and only if it eventually stabilizes to $x$, that is, there is some $N$ such that for all $n \geq N$, $x_n = x$. However, there are many examples of Taylor Cartesian differential categories where the sequence of Taylor differential polynomials never stabilizes.

\section{Infinite Sums and Modelling Taylor Expansion}\label{sec:infinite}

In this section, we consider Cartesian differential categories with countable infinite sums and show that being Taylor is equivalent to every map being equal to the infinite sum of its Taylor differential monomials. This implies that in a setting with countable sums, the notion of Taylor series of a map given by infinite sums or by convergence via our ultrametric is the same. From this, we then show that Manzonetto's notion of \emph{modelling Taylor expansion} in a categorical model of the differential $\lambda$-calculus with countable sums \cite{manzonetto2012categorical} is equivalent to our notion of Taylor. 

We begin by discussing Cartesian differential categories with countable sums. Briefly, recall that a commutative countably complete semiring is a commutative semiring $k$ with an appropriate notion of arbitrary countable sums, and such that these countable sums satisfy certain distributivity and partitions axioms, and that a countably complete $k$-module is a $k$-module which also has arbitrary countable sums, which again satisfy certain obvious compatibility axioms (see \cite[Chap 23]{golan2013semirings} for more details). Then we define a \textbf{left countably complete $k$-linear category} to be a category whose homsets are countably complete $k$-modules, so we can take countable infinite sums of maps, such that pre-composition preserves these countable sums: $(\sum_{i \in I} r_i \cdot f_i) \circ x = \sum_{i \in I} r_i \cdot f_i \circ x$. A map $f$ is said to be \textbf{countably $k$-linear} if post-composition by $f$ preserves the countable sums: $f \circ (\sum_{i \in I} r_i \cdot x_i) = \sum_{i \in I} r_i \cdot f \circ x_i$. Then a \textbf{Cartesian left countably complete $k$-linear category} is a left countably complete $k$-linear category with finite products such that the projections maps are countably $k$-linear. Note that a (Cartesian) left countably complete $k$-linear category is also a (Cartesian) $k$-linear category. Then the countably complete version of a Cartesian differential category is essentially the same but where we upgrade \textbf{[CD.1]} to require that the differential combinator be countably $k$-linear, and also \textbf{[CD.2]} to asking that derivatives be countably $k$-linear in their second argument. 

\begin{definition} A \textbf{Cartesian countably complete $k$-differential category} is a Cartesian left countably complete $k$-linear category equipped with a differential combinator $\mathsf{D}$ such that: 
 \begin{enumerate}[{\bf [CD.1+]}]
\item $\dfrac{\mathsf{d} \sum\limits_{i \in I} r_i \cdot f_i(x)}{\mathsf{d}x}(a) \cdot b =  \sum\limits_{i \in I} r_i \cdot \dfrac{\mathsf{d}f_i(x)}{\mathsf{d}x}(a) \cdot b$;
 \item $\dfrac{\mathsf{d}f(x)}{\mathsf{d}x}(a) \cdot (\sum\limits_{i \in I} r_i \cdot b_i) = \sum\limits_{i \in I} r_i \cdot \dfrac{\mathsf{d}f(x)}{\mathsf{d}x}(a) \cdot b_i$
 \end{enumerate}
\end{definition}

When we can scalar multiply by positive rationals, there is of course a natural notion of Taylor series of maps. In what follows, we assume that $k$ is also a commutative $\mathbb{Q}_{\geq 0}$-algebra. 

\begin{definition} In a Cartesian countably complete $k$-differential category, for a map $f: A \to B$, define the map $\mathcal{T}[f]: A \to B$ as the infinite sum of its Taylor $\mathsf{D}$-monomials:
\[\mathcal{T}[f] = \sum^\infty_{n=0} \mathcal{M}^{(n)}[f]\]
The map $\mathcal{T}[f]$ is called the \textbf{Taylor series expansion} of $f$. 
\end{definition}

\begin{proposition}\label{prop:inf1} A Cartesian countably complete $k$-differential category is Taylor if and only if for every map $f$, $\mathcal{T}[f]=f$. 
\end{proposition}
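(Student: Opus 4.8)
The plan is to prove both implications directly from the definitions, using the countable-additivity axioms \textbf{[CD.1+]} and \textbf{[CD.2+]} to control how $\mathcal{M}^{(k)}[-]$ interacts with the infinite sum defining $\mathcal{T}[f]$. The crucial computational input will be Lemma \ref{lemma:monomial}.(\ref{M-8}), which says $\mathcal{M}^{(k)}[\mathcal{M}^{(n)}[f]] = \mathcal{M}^{(n)}[f]$ when $k=n$ and $0$ otherwise. Combined with the fact that, in a countably complete setting, $\mathcal{M}^{(k)}[-]$ should commute with countable sums (this is where I would first need to do a small amount of work), this will show that $\mathcal{M}^{(k)}[\mathcal{T}[f]] = \sum_{n=0}^\infty \mathcal{M}^{(k)}[\mathcal{M}^{(n)}[f]] = \mathcal{M}^{(k)}[f]$ for every $k$; in other words $\mathcal{T}[f] \sim f$ always, regardless of whether the category is Taylor.

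For the ($\Leftarrow$) direction, suppose $\mathcal{T}[f]=f$ for every $f$. To show $\mathbb{X}$ is Taylor, take parallel maps with $f \sim g$, i.e. $\mathcal{M}^{(n)}[f]=\mathcal{M}^{(n)}[g]$ for all $n$. Then $f = \mathcal{T}[f] = \sum_{n=0}^\infty \mathcal{M}^{(n)}[f] = \sum_{n=0}^\infty \mathcal{M}^{(n)}[g] = \mathcal{T}[g] = g$, so Taylor-equivalence implies equality. For the ($\Rightarrow$) direction, suppose $\mathbb{X}$ is Taylor. Given any $f$, the observation above gives $\mathcal{T}[f] \sim f$, i.e. $\mathcal{M}^{(k)}[\mathcal{T}[f]] = \mathcal{M}^{(k)}[f]$ for all $k$; since $\mathbb{X}$ is Taylor, this forces $\mathcal{T}[f]=f$.

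The step I expect to require the most care is justifying that $\mathcal{M}^{(k)}[-]$ commutes with the countable sum $\sum_{n=0}^\infty \mathcal{M}^{(n)}[f]$. Recall $\mathcal{M}^{(k)}[h] = \frac{1}{k!}\cdot \partial^{(k)}[h] \circ \langle 0, 1_A, \ldots, 1_A\rangle$, so one must push a countable sum through $\partial^{(k)}[-]$ (which is an iterate of $\mathsf{D}$, handled by \textbf{[CD.1+]}, together with partial-derivative zero-insertions), then through precomposition with $\langle 0,1_A,\ldots,1_A\rangle$ (handled by the fact that precomposition preserves countable sums in a left countably complete $k$-linear category) and through scalar multiplication by $\frac{1}{k!}$ (fine since scalar multiplication distributes over countable sums in a countably complete $k$-module). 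This is essentially a routine induction on $k$ using \textbf{[CD.1+]}, but it is the only place where the countable-completeness hypothesis is actually used, so it deserves to be spelled out. Everything else is a two-line consequence of Lemma \ref{lemma:monomial}.(\ref{M-8}) and the definition of Taylor.
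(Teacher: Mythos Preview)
Your proposal is correct and follows essentially the same route as the paper: both first establish that $\mathcal{M}^{(k)}[-]$ preserves countable sums (via \textbf{[CD.1+]} and precomposition), deduce $\mathcal{T}[f]\sim f$ for all $f$, and then dispatch the two implications exactly as you do. The only cosmetic difference is that the paper cites Lemma~\ref{lemma:monomial}.(\ref{T-1}) rather than (\ref{M-8}) to collapse the sum, but since (\ref{T-1}) is itself derived from (\ref{M-3}) and (\ref{M-8}), this is the same argument.
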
 
\begin{proof} It is straightforward to see that \textbf{[CD.1+]} also implies that $\mathcal{M}^{(n)}\left[ \sum_{i \in I} r_i \cdot g_i \right] = \sum_{i \in I} r_i \cdot \mathcal{M}^{(n)}[g_i]$. From this and Lemma \ref{lemma:monomial}.(\ref{T-1}), we get that $\mathcal{M}^{(n)}\left[ \mathcal{T}[f] \right] = \mathcal{M}^{(n)}[f]$. So $\mathcal{T}[f] \sim f$. Now for the $\Rightarrow$ direction, suppose that we are in a Taylor setting. Then since $\mathcal{T}[f] \sim f$, it follows that $\mathcal{T}[f]=f$. Conversely, for the $\Leftarrow$ direction, suppose that $\mathcal{T}[f]=f$ for all maps $f$. Now consider parallel maps such that $f \sim g$, which means that $\mathcal{M}^{(n)}[f]=\mathcal{M}^{(n)}[g]$ for all $n$. This gives us that:
\[f= \mathcal{T}[f] = \sum\limits^\infty_{n=0} \mathcal{M}^{(n)}[f] = \sum\limits^\infty_{n=0} \mathcal{M}^{(n)}[g] = \mathcal{T}[g] = g\] 
So we conclude that our Cartesian countably complete $k$-differential category is Taylor. 
\end{proof}

So the above statement tells us that in a Taylor Cartesian differential category with infinite sums, every map is equal to its Taylor series whether given via infinite sums of the Taylor differential monomials or the convergence of the Taylor differential polynomials sequence with respect to the ultrametric $\mathsf{d}_\mathsf{D}$. 

We now turn our attention to Manzonetto's notion of modelling Taylor expansion in a Cartesian \emph{closed} differential category (also sometimes called a differential $\lambda$ category). As mentioned, the categorical semantics of the differential $\lambda$-calculus is provided by Cartesian \emph{closed} differential categories. Every model of the differential $\lambda$-calculus induces a Cartesian closed differential category \cite[Thm 4.3]{Cockett-2019}, and conversely, every Cartesian closed differential category induces a model of the differential $\lambda$-calculus \cite[Thm 4.12]{bucciarelli2010categorical}. For a more in-depth introduction to Cartesian closed differential categories and the differential $\lambda$-calculus, we refer the reader to \cite{bucciarelli2010categorical,manzonetto2012categorical,Cockett-2019,cockett2020linearizing, ehrhard2003differential,cockett2016categorical}. 

 In a Cartesian closed category, we denote internal homs by $[A,B]$, the evaluation maps by ${\epsilon: A \times [A.B] \to B}$, and for a map $f: C \times A \to B$, we denote its Curry by $\lambda(f): A \to [C,B]$, which recall is the unique map such that $\epsilon \circ (1_C \times \lambda(f)) = f$. In the term calculus, we write as usual: 
 \[\lambda(f)(a) = \lambda x. f(x,a)\] 
 and so $\epsilon(a, \lambda(f)(c)) = f(a,c)$, or:
 \[\left( \lambda x. f(x,a) \right)(c) = f(a,c)\]
 As explained in \cite[Lemma 4.10]{Cockett-2019}, there are two equivalent ways of expressing compatibility between the closed structure and the differential combinator: one which says that the derivative of a Curry is the Curry of the \emph{partial} derivative, and the other one which says that evaluation maps are $\mathsf{D}$-linear in the internal hom argument.
 
\begin{definition}\label{CDCcloseddef} A \textbf{Cartesian closed $k$-differential category} \cite[Def 2.9]{garner2020cartesian} is a Cartesian $k$-differential category whose underlying category is also cartesian closed, and such that one of the following equivalent axioms hold: 
\begin{description}
\item[{\bf [CD.$\lambda$]}] $\dfrac{\mathsf{d} \lambda y.f(x,y)}{\mathsf{d}x}(a) \cdot b = \lambda y. \dfrac{\mathsf{d} f(x,y)}{\mathsf{d}x}(a) \cdot b$;
\item [{\bf [CD.$\epsilon$]}] $\dfrac{\mathsf{d} \epsilon(a,j)}{\mathsf{d}j}(f) \cdot g = \epsilon(a,g)$
\end{description}
\end{definition}

From this, it follows that in a Cartesian closed $k$-differential category, the Curry operator is $k$-linear. 

\begin{lemma}\label{lem:Curry-k} In a Cartesian closed $k$-differential category, $\lambda(r \cdot f + s \cdot g) = r \cdot \lambda(f) + s \cdot \lambda(g)$, which in the term calculus is written as: 
\[  \lambda x. \left( r \cdot f(x,a) + s \cdot g(x,a) \right) =  r \cdot \lambda x.f(x,a) +  s \cdot \lambda x.g(x,a)   \]
\end{lemma}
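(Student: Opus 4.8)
The plan is to reduce the claim to the universal property of the Curry operation, feeding in axiom \textbf{[CD.$\epsilon$]}. Recall that for $h: C\times A\to B$, the map $\lambda(h): A\to[C,B]$ is \emph{the unique} map with $\epsilon\circ(1_C\times\lambda(h))=h$; so it suffices to check that $r\cdot\lambda(f)+s\cdot\lambda(g)$ satisfies this defining equation for $h=r\cdot f+s\cdot g$, and uniqueness of the Curry then forces $\lambda(r\cdot f+s\cdot g)=r\cdot\lambda(f)+s\cdot\lambda(g)$.

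The one ingredient beyond the closed-structure axioms is the standard observation that a map which is $\mathsf{D}$-linear in one of its arguments is automatically $k$-linear in that argument: this is immediate from \textbf{[CD.2]}, since $\mathsf{D}$-linearity in that slot rewrites the map as a (partial) derivative taken in that slot, and derivatives are $k$-linear in their direction argument (see also \cite{cockett2020linearizing,garner2020cartesian}). By \textbf{[CD.$\epsilon$]}, the evaluation map $\epsilon$ is $\mathsf{D}$-linear in its internal-hom argument, hence $k$-linear there, so
\[\epsilon(x,\,r\cdot j+s\cdot \ell)=r\cdot\epsilon(x,j)+s\cdot\epsilon(x,\ell).\]

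Now I would run the verification in the term calculus. Evaluating $r\cdot\lambda(f)+s\cdot\lambda(g)$ at a variable $a$ and using that pre-composition preserves the $k$-module structure on homsets gives $(r\cdot\lambda(f)+s\cdot\lambda(g))(a)=r\cdot\lambda(f)(a)+s\cdot\lambda(g)(a)$. Plugging this into $\epsilon(x,-)$, applying the displayed $k$-linearity of $\epsilon$ in the internal-hom slot, and then using $\epsilon(x,\lambda(f)(a))=f(x,a)$ and $\epsilon(x,\lambda(g)(a))=g(x,a)$ yields $\epsilon\bigl(x,(r\cdot\lambda(f)+s\cdot\lambda(g))(a)\bigr)=r\cdot f(x,a)+s\cdot g(x,a)$, i.e.\ $\epsilon\circ\bigl(1_C\times(r\cdot\lambda(f)+s\cdot\lambda(g))\bigr)=r\cdot f+s\cdot g$. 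The conclusion follows by uniqueness of the Curry.

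I do not expect a genuine obstacle here; the only delicate point is the bookkeeping of how the Cartesian structure $1_C\times(-)$, pre-composition, and the $k$-module structure on the homsets interact, and working in the term calculus of \cite{blute2009cartesian} keeps this routine. An equivalent route, if one prefers to avoid the term calculus, is to show directly that the uncurrying map $h\mapsto\epsilon\circ(1_C\times h)$ is $k$-linear by the same appeal to \textbf{[CD.$\epsilon$]}, and then invoke that the inverse of a bijective homomorphism of $k$-modules is again $k$-linear.
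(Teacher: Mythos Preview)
Your proof is correct and follows essentially the same route as the paper: both use \textbf{[CD.$\epsilon$]} together with \textbf{[CD.2]} to obtain $k$-linearity of $\epsilon$ in the internal-hom argument, and then conclude via the universal property of currying (the paper phrases this last step as ``$\epsilon$ is monic in its second argument,'' which is the same thing). The only cosmetic difference is that you package the passage from $\mathsf{D}$-linearity to $k$-linearity as a cited lemma, whereas the paper unfolds it inline.
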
 
\begin{proof} By {\bf [CD.$\epsilon$]} and {\bf [CD.2]}, we easily compute that: 
\begin{gather*}
\epsilon\left(a, \lambda(r \cdot f + s \cdot g)(c) \right) = (r \cdot f + s \cdot g)(c,a) = r \cdot f(c,a) + s \cdot g(c,a) \\
=  r \cdot \epsilon(a, \lambda(f)(c)) +  s \cdot \epsilon(a, \lambda(g)(c)) =  r \cdot \dfrac{\mathsf{d} \epsilon(a,j)}{\mathsf{d}j}(f) \cdot \left(  \lambda( f )(c) \right)  + s \cdot \dfrac{\mathsf{d} \epsilon(a,j)}{\mathsf{d}j}(f) \cdot \left(  \lambda( g)(c) \right)\\
 = \dfrac{\mathsf{d} \epsilon(a,j)}{\mathsf{d}j}(h) \cdot \left( r \cdot \lambda(f)(c) + s\cdot \lambda (g)(c) \right) = \epsilon\left(a, \left( r \cdot\lambda( f ) + s \cdot \lambda(g) \right) (c) \right)
\end{gather*}
Since $\epsilon$ is monic in its second argument, we then get that $ \lambda(r \cdot f + s \cdot g)(c) =  \left( r \cdot\lambda( f ) + s \cdot \lambda(g) \right) (c)$. So, we conclude that the Curry operator is indeed $k$-linear as desired. 
\end{proof}

To help define Taylor expansion in a Cartesian closed differential category, Manzonetto introduces a $\star$ operation \cite[Def 4.7]{manzonetto2012categorical} which takes a map $f: C \times A \to B$ and a map $g: C \to A$, and produces a map $f \star g: C \times A \to B$ defined as:
\[(f \star g)(c,a) = \dfrac{\mathsf{d}f(c, x)}{\mathsf{d}x}(a) \cdot g(c)\] 
Successive application of the $\star$ operation is worked out to be:
\[\left(\hdots ((f \star g) \star g) \hdots) \star g\right)(c,a) = \dfrac{\mathsf{d}^{(n)}f(c, x)}{\mathsf{d}x}(a) \cdot g(c) \cdot \hdots \cdot g(c)\] 
Then in a setting with countable sums, Manzonetto defines Taylor expansion using the infinite sums of the iterated $\star$ operations. In \cite{manzonetto2012categorical}, Manzonetto worked for convenience in a setting where addition was idempotent. Here, we provide the definition in the general case where we add back the scalar multiplication in the Taylor expansion. We also give the unCurry version of the definition, which is equivalent to the Curry version. 

\begin{definition} A Cartesian closed countably complete $k$-differential category is said to \textbf{model Taylor expansion} \cite[Def 5.19]{manzonetto2012categorical} if for every map $f: C \times A \to B$ and a map $g: C \to A$,  
\begin{align}
    f(c, g(c)) = \sum\limits^\infty_{n=0} \frac{1}{n!} \cdot \frac{\mathsf{d}^{(n)}f(c, y)}{\mathsf{d}y}(0) \cdot g(c) \cdot \hdots \cdot g(c)
\end{align}
\end{definition}

We conclude with the equivalence of being Taylor and modelling Taylor expansion.  To do so, we first note that in the countably complete setting, the Curry operator is countably $k$-linear. 

\begin{lemma}\label{lem:Curry-k-inf} In a Cartesian closed countably complete $k$-differential category, the following equality holds: ${\lambda\left( \sum_{i \in I} r_i \cdot f_i \right) = \sum_{i \in I} r_i \cdot \lambda(f_i)}$, which in the term calculus is written as: 
\[  \lambda x. \left( \sum_{i \in I} r_i \cdot f_i (x,a) \right)  = \sum_{i \in I} r_i \cdot \lambda x.f_i(x,a)   \]
\end{lemma}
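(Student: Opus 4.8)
The plan is to prove this the same way Lemma~\ref{lem:Curry-k} was proved, but using the countably complete versions of the axioms. The statement is that the Curry operator commutes with countable sums, and the natural strategy is to exploit the fact that $\epsilon$ is monic in its second argument: it suffices to show that $\epsilon\left(a, \lambda\left( \sum_{i \in I} r_i \cdot f_i \right)(c)\right) = \epsilon\left(a, \left(\sum_{i \in I} r_i \cdot \lambda(f_i)\right)(c)\right)$ for all $a$ and $c$, and then cancel $\epsilon$.

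First I would unwind the left-hand side using the defining property of Curry: $\epsilon\left(a, \lambda\left(\sum_{i \in I} r_i \cdot f_i\right)(c)\right) = \left(\sum_{i \in I} r_i \cdot f_i\right)(c,a)$. Then I would use that pre-composition preserves countable sums in a left countably complete $k$-linear category (the axiom $(\sum_{i \in I} r_i \cdot f_i) \circ x = \sum_{i \in I} r_i \cdot f_i \circ x$) to rewrite this as $\sum_{i \in I} r_i \cdot f_i(c,a)$. Next, I would rewrite each $f_i(c,a)$ as $\epsilon(a, \lambda(f_i)(c))$, and then invoke {\bf [CD.$\epsilon$]} to express evaluation as a derivative: $f_i(c,a) = \epsilon(a, \lambda(f_i)(c)) = \dfrac{\mathsf{d}\epsilon(a,j)}{\mathsf{d}j}(h) \cdot \lambda(f_i)(c)$ for any chosen point $h$. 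The crucial step is then to pull the countable sum inside the derivative's second (linear) argument using {\bf [CD.2+]}, which says derivatives are countably $k$-linear in their second argument: $\sum_{i \in I} r_i \cdot \dfrac{\mathsf{d}\epsilon(a,j)}{\mathsf{d}j}(h) \cdot \lambda(f_i)(c) = \dfrac{\mathsf{d}\epsilon(a,j)}{\mathsf{d}j}(h) \cdot \left(\sum_{i \in I} r_i \cdot \lambda(f_i)(c)\right)$. Using {\bf [CD.$\epsilon$]} once more in reverse, this equals $\epsilon\left(a, \sum_{i \in I} r_i \cdot \lambda(f_i)(c)\right) = \epsilon\left(a, \left(\sum_{i \in I} r_i \cdot \lambda(f_i)\right)(c)\right)$, again invoking that pre-composition preserves countable sums. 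Since $\epsilon$ is monic in its second argument, cancelling $\epsilon$ yields $\lambda\left(\sum_{i \in I} r_i \cdot f_i\right)(c) = \left(\sum_{i \in I} r_i \cdot \lambda(f_i)\right)(c)$, and since this holds for all $c$, we conclude $\lambda\left(\sum_{i \in I} r_i \cdot f_i\right) = \sum_{i \in I} r_i \cdot \lambda(f_i)$.

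The only subtlety, and the main thing to be careful about rather than a genuine obstacle, is making sure the countably complete axioms are being applied correctly: we need {\bf [CD.2+]} (countable linearity of the derivative in its second argument) rather than just finite {\bf [CD.2]}, and we need the left countably complete structure axiom (pre-composition preserves countable sums) to move between $\left(\sum_i r_i \cdot f_i\right)(c,a)$ and $\sum_i r_i \cdot f_i(c,a)$, as well as between $\left(\sum_i r_i \cdot \lambda(f_i)\right)(c)$ and $\sum_i r_i \cdot \lambda(f_i)(c)$. Beyond that the argument is a routine adaptation of the finitary proof of Lemma~\ref{lem:Curry-k}, so I do not anticipate any real difficulty.
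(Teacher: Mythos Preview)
Your proposal is correct and follows exactly the approach the paper takes: the paper's own proof of Lemma~\ref{lem:Curry-k-inf} simply states that it is the same as the proof of Lemma~\ref{lem:Curry-k} but using {\bf [CD.2+]} in place of {\bf [CD.2]}, and leaves the details as an exercise. Your write-up is precisely that exercise carried out, with the correct identification of where the countably complete axioms (pre-composition preserving countable sums, and {\bf [CD.2+]}) are needed.
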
 
\begin{proof} The proof is essentially the same as the proof of Lemma \ref{lem:Curry-k}, but where we make use of {\bf [CD.2+]} instead of simply {\bf [CD.2]}. As such, we leave this as an exercise for the reader. 
\end{proof}

\begin{proposition} A Cartesian closed countably complete $k$-differential category is Taylor if and only if it models Taylor expansion.   
\end{proposition}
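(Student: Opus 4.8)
The plan is to invoke Proposition \ref{prop:inf1}, which says that being Taylor is equivalent to $\mathcal{T}[h] = h$ for every map $h$, and to relate this condition to modelling Taylor expansion by passing through the Curry operator.

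For the ($\Rightarrow$) direction, assume the category is Taylor and fix $f: C \times A \to B$ and $g: C \to A$. I would apply Proposition \ref{prop:inf1} to the map $\lambda(f): A \to [C,B]$. The key step is the identity
\[ \partial^{(n)}[\lambda(f)](a_0, a_1, \hdots, a_n) = \lambda c.\left( \dfrac{\mathsf{d}^{(n)} f(c,y)}{\mathsf{d}y}(a_0) \cdot a_1 \cdot \hdots \cdot a_n \right), \]
obtained by iterating \textbf{[CD.$\lambda$]} and unwinding the inductive definition of $\partial^{(n)}$, which says that higher-order derivatives commute with Curry. Combining this with Lemma \ref{lem:Curry-k} (Curry commutes with scalar multiplication) gives $\mathcal{M}^{(n)}[\lambda(f)] = \lambda(G_n)$, where $G_n: C \times A \to B$ is the map sending $(c,a)$ to $\frac{1}{n!} \cdot \frac{\mathsf{d}^{(n)} f(c,y)}{\mathsf{d}y}(0) \cdot a \cdot \hdots \cdot a$. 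Then Lemma \ref{lem:Curry-k-inf} yields $\mathcal{T}[\lambda(f)] = \sum_{n} \lambda(G_n) = \lambda\!\left( \sum_{n} G_n \right)$, and since the category is Taylor, Proposition \ref{prop:inf1} forces $\lambda(f) = \lambda\!\left( \sum_{n} G_n \right)$. As Curry is a bijection, $f = \sum_{n} G_n$; precomposing this identity with $\langle 1_C, g \rangle$ (precomposition preserves countable sums) gives precisely the modelling Taylor expansion equation for $f$ and $g$.

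For the ($\Leftarrow$) direction, assume the category models Taylor expansion and let $h: X \to B$ be arbitrary. I would instantiate the modelling Taylor expansion equation at $f := h \circ \pi_2 : X \times X \to B$ and $g := 1_X : X \to X$. Its left-hand side is $h \circ \pi_2 \circ \langle 1_X, 1_X \rangle = h$. For the right-hand side, a routine induction using the chain rule \textbf{[HD.5]} together with the $\mathsf{D}$-linearity of $\pi_2$ (so all of its higher derivatives vanish) shows that $\frac{\mathsf{d}^{(n)}(h \circ \pi_2)(c,y)}{\mathsf{d}y}(0) \cdot c \cdot \hdots \cdot c = \frac{\mathsf{d}^{(n)} h(u)}{\mathsf{d}u}(0) \cdot c \cdot \hdots \cdot c = n! \cdot \mathcal{M}^{(n)}[h](c)$; hence the right-hand side equals $\sum_n \mathcal{M}^{(n)}[h] = \mathcal{T}[h]$. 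So $h = \mathcal{T}[h]$, and as $h$ was arbitrary, Proposition \ref{prop:inf1} shows the category is Taylor.

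The main obstacle is the ($\Rightarrow$) direction: verifying the displayed commutation of higher-order derivatives with Curry (a careful but routine induction from \textbf{[CD.$\lambda$]}) and checking that the two appearances of infinite sums --- inside the Curry when forming $\mathcal{T}[\lambda(f)]$, and when precomposing the resulting identity with $\langle 1_C, g \rangle$ --- are legitimate, which rests on Lemma \ref{lem:Curry-k-inf} and the axiom \textbf{[CD.1+]} respectively. The ($\Leftarrow$) direction is then a short specialization once the behaviour of $\pi_2$ under differentiation is recorded.
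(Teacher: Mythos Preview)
Your proposal is correct and follows essentially the same approach as the paper: both directions invoke Proposition \ref{prop:inf1}, the forward direction passes through $\mathcal{T}[\lambda(f)]$ via \textbf{[CD.$\lambda$]} and Lemma \ref{lem:Curry-k-inf}, and the backward direction instantiates the modelling Taylor expansion formula at $f = h \circ \pi_2$, $g = 1$. One tiny quibble: the fact that precomposition with $\langle 1_C, g\rangle$ preserves countable sums comes from the left countably complete $k$-linear category structure itself, not from \textbf{[CD.1+]} (which concerns the differential combinator).
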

\begin{proof} For the $\Rightarrow$ direction, since we are in a Taylor setting, by Prop \ref{prop:inf1} we have that $\mathcal{T}[h]=h$ for all maps $h$. Then for $f: C \times A \to B$, we compute the Taylor series of its curry $\lambda(f)$ by using Lemma \ref{lem:Curry-k-inf} to be: 
\begin{gather*}
    \mathcal{T}[\lambda(f)](a) = \sum\limits^\infty_{n=0} \frac{1}{n!} \cdot \frac{\mathsf{d}^{(n)}\lambda(f)(y)}{\mathsf{d}y}(0) \cdot a \cdot \hdots \cdot a= \sum\limits^\infty_{n=0} \frac{1}{n!} \cdot \frac{\mathsf{d}^{(n)}\lambda x.f(x,y)}{\mathsf{d}y}(0) \cdot a \cdot \hdots \cdot a \\
    = \sum\limits^\infty_{n=0}  \frac{1}{n!} \cdot \lambda x.\frac{\mathsf{d}^{(n)}f(x,y)}{\mathsf{d}y}(0) \cdot a \cdot \hdots \cdot a = \lambda x. \left( \sum\limits^\infty_{n=0}  \frac{1}{n!} \cdot \frac{\mathsf{d}^{(n)}f(x,y)}{\mathsf{d}y}(0) \cdot a \cdot \hdots \cdot a \right) 
\end{gather*}
From this, we then compute that:
\begin{gather*}
    f(c,g(c)) = \lambda(f)(g(c))(c) = \mathcal{T}[\lambda(f)](g(c))(c) \\
    = \left( \lambda x. \left( \sum\limits^\infty_{n=0}  \frac{1}{n!} \cdot \frac{\mathsf{d}^{(n)}f(x,y)}{\mathsf{d}y}(0) \cdot g(c) \cdot \hdots \cdot g(c) \right) \right)(c) =  \sum\limits^\infty_{n=0}  \frac{1}{n!} \cdot \frac{\mathsf{d}^{(n)}f(c,y)}{\mathsf{d}y}(0) \cdot g(c) \cdot \hdots \cdot g(c)
\end{gather*}
So we model Taylor expansion as desired. For the $\Leftarrow$ direction, by Prop \ref{prop:inf1}, it suffices to show that for every map $h: C \to B$, $\mathcal{T}[h]=h$. Now define $f: C \times C \to B$ as $f = h \circ \pi_2$, so $f(x,y) = h(y)$, and take $g = 1_C: C \to C$. Then using the modelling Taylor expansion formula, we compute that: 
\[h(c) = h(g(c)) = f(c,g(c))= \sum\limits^\infty_{n=0} \frac{1}{n!} \cdot \frac{\mathsf{d}^{(n)}f(c, y)}{\mathsf{d}y}(0) \cdot g(c) \cdot \hdots \cdot g(c) = \sum\limits^\infty_{n=0} \frac{1}{n!} \cdot \frac{\mathsf{d}^{(n)}h(y)}{\mathsf{d}x}(0) \cdot c \cdot \hdots \cdot c = \mathcal{T}[h](c)\] 
So $\mathcal{T}[h]=h$ as desired. 
\end{proof}

\bibliographystyle{./entics}
\bibliography{MFPS2024}
\end{document}